\newtheorem{theorem}{Theorem}[section]
\newtheorem{lemma}[theorem]{Lemma}
\newtheorem{proposition}[theorem]{Proposition}
\newtheorem{observation}[theorem]{Observation}
\newtheorem*{ntheorem}{Theorem}
\theoremstyle{definition}
\newtheorem{remark}[theorem]{Remark}
\newtheorem*{nremark}{Remark}
\newcommand{\Z}{\mathbb{Z}}
\newcommand{\Q}{\mathbb{Q}}
\newcommand{\R}{\mathbb{R}}
\newcommand{\F}{\mathbb{F}}
\newcommand{\I}{^{-1}}
\newcommand{\Frame}{\mathcal{F}}
\newcommand{\Ocal}{\mathcal{O}}
\newcommand{\sector}[1]{\mathscr{#1}}
\DeclareMathOperator{\SL}{SL}
\DeclareMathOperator{\GL}{GL}
\DeclareMathOperator{\lk}{lk}
\DeclareMathOperator{\CAT}{CAT}
\DeclareMathOperator{\diag}{diag}
\DeclareMathOperator{\Min}{Min}
\newcommand{\abs}[1]{\lvert #1 \rvert}
\newcommand{\GroupScheme}{\mathbf{G}}
\newcommand{\defeq}{\mathbin{\vcentcolon =}}
\numberwithin{equation}{section}
\begin{document}

\title{A free subgroup in the image of the 4-strand Burau representation}
\date{\today}
\keywords{Burau representation, braid group, free group, ping-pong, building}
\subjclass[2010]{Primary: 20F65; 
		 Secondary: 51E24, 
			    57M07
}

\author[S.~Witzel]{Stefan Witzel}
 \address{Mathematical Institute, University of M\"unster,
   Einsteinstra\ss{}e 62, 48149 M\"unster, Germany}
 \email{s.witzel@uni-muenster.de}

\author[M.~C.~B.~Zaremsky]{Matthew C.~B.~Zaremsky}
 \address{Mathematical Institute, University of M\"unster,
   Einsteinstra\ss{}e 62, 48149 M\"unster, Germany}
\email{mzare\_01@uni-muenster.de}

\begin{abstract}
It is known that the Burau representation of the $4$-strand braid group is faithful if and only if certain matrices $f$ and $k$ generate a (non-abelian) free group. Regarding $f$ and $k$ as isometries of a euclidean building we show that $f^3$ and $k^3$ generate a free group. We give two proofs, one utilizing the metric geometry of the building, and the other using simplicial retractions.
\end{abstract}

\maketitle
\thispagestyle{empty}

It is a longstanding open problem to determine whether the Burau representation of the $4$-strand braid group is faithful. For braid groups $B_n$, this is the only open case; for $n<4$ the Burau representation of $B_n$ is faithful \cite{magnus67}, and for $n>4$ it is not \cite{moody91, long93, bigelow99}. For $n=4$ the faithfulness question is equivalent to the question of whether a certain pair of matrices $f,k\in\SL_3(\Z[t,t\I])$ generate a non-abelian free group \cite[Theorem~3.19]{birman74} (see also \cite[Proposition~4.1]{alperin02}). Concrete interest in the faithfulness of the $4$-strand Burau representation stems from the expectation that a non-trivial element of the kernel would give rise to a non-trivial knot with trivial Jones polynomial (in fact trivial HOMFLY polynomial) \cite[Conjecture~3.2]{bigelow02}. Using the explicit geometry of the relevant euclidean building we show

\begin{ntheorem}
If $m,n\ge 3$ then $f^m$ and $k^n$ generate a free group of rank $2$.
\end{ntheorem}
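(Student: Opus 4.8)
The plan is to realise $f$ and $k$ as isometries of a Euclidean building and play ping-pong with their cubes (and higher powers). Let $v$ be the $t$-adic valuation on $\Q(t)$, let $K=\Q((t))$ be its completion, and let $X$ be the associated thick, $2$-dimensional, $\CAT(0)$ Euclidean building of type $\widetilde{A}_2$; $\SL_3(K)$ acts on $X$ by type-preserving isometries. Since $F_2$ is Hopfian, the composite $F_2\twoheadrightarrow\langle f^m,k^n\rangle\hookrightarrow\SL_3(\Z[t,t\I])\to\SL_3(K)\to\operatorname{Isom}(X)$ is injective once its image is free of rank $2$, so it suffices to produce, for all $m,n\ge 3$, a ping-pong for the images of $f^m$ and $k^n$ in $\operatorname{Isom}(X)$. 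First I would check by a direct computation that $f$ is conjugate in $\GL_3(K)$ to a diagonal matrix whose three entries have pairwise distinct valuations (concretely, valuations $-1,0,1$); the same then holds for $k$, which in fact is conjugate to $f$. Hence $f$ and $k$ are \emph{regular hyperbolic} isometries of $X$: $f$ stabilises the apartment $\mathcal A_f$ spanned by the frame of its eigenlines and translates it by a \emph{regular} vector $w_f$, whose length $\tau$ is small --- which is why powers will be needed --- so $f$ fixes exactly two (opposite, chamber-interior) points $\xi_f^{\pm}\in\partial_\infty X$, with $\xi_f^{+}$ the ideal endpoint of the ray in direction $+w_f$; likewise for $k$. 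Because $f$ fixes $\xi_f^{+}$, the difference $b_{\xi_f^{+}}(fx)-b_{\xi_f^{+}}(x)$ is independent of $x$, and evaluating it inside $\mathcal A_f$ gives $b_{\xi_f^{+}}(f^{\,j}x)=b_{\xi_f^{+}}(x)-j\tau$ for all $x\in X$, $j\in\Z$ (and symmetrically for $\xi_f^{-}$ and for $k$); in particular $f$ and $k$ have infinite order.

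Next I would work out the relative position of the two axes by comparing the eigen-frames of $f$ and $k$ --- six lines in $K^{3}$, whose mutual configuration is an explicit computation. This should show that $\xi_f^{+},\xi_f^{-},\xi_k^{+},\xi_k^{-}$ are four \emph{distinct} points of $\partial_\infty X$, and it identifies the bounded ``junction'' $J\subseteq X$ where the apartments $\mathcal A_f$ and $\mathcal A_k$ (hence the two pencils of axes) come close. Fixing a special vertex $x_0$ near $J$, I would then choose four pairwise disjoint regions $\sector{S}_f^{\pm},\sector{S}_k^{\pm}$ --- combinatorial sectors with tip $x_0$ pointing toward the respective ideal points, or equivalently suitably thinned Busemann sub-level sets $\{b_\xi<-N\}$ --- and set $A\defeq\sector{S}_f^{+}\cup\sector{S}_f^{-}$ and $B\defeq\sector{S}_k^{+}\cup\sector{S}_k^{-}$, so that $A\cap B=\emptyset$. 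That the four ideal points are distinct is precisely what makes a disjoint such choice possible.

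The crux is then the \emph{escape lemma}: with the regions chosen well one should have, for every integer $j$ with $\abs{j}\ge 3$, that $f^{\,j}(X\setminus\sector{S}_f^{-})\subseteq\sector{S}_f^{+}$ when $j>0$ and $f^{\,j}(X\setminus\sector{S}_f^{+})\subseteq\sector{S}_f^{-}$ when $j<0$, and symmetrically for $k$. Here the geometry of $X$ does the work: for $x\notin\mathcal A_f$ one has $\pi_{\mathcal A_f}(f^{\,j}x)=\pi_{\mathcal A_f}(x)+j\,w_f$ and $d(f^{\,j}x,\mathcal A_f)=d(x,\mathcal A_f)$, so the projection of $f^{\,j}x$ marches straight toward $\xi_f^{+}$ at bounded transverse distance while the Busemann identity above controls how fast; feeding in the explicit $\widetilde{A}_2$ geometry near $J$ shows that three steps of length $\tau$ are enough to clear the junction. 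The exponent $3$ is not a mere convenience: the same scheme with exponent $1$ would settle the $4$-strand faithfulness question, which is open. So the case $m=n=3$ already carries the whole theorem: for any $m,n\ge 3$ and any nonzero integers $i,j$ the exponents $mi,nj$ have absolute value $\ge 3$, whence $f^{\,mi}(B)\subseteq A$ and $k^{\,nj}(A)\subseteq B$ with $A\cap B=\emptyset$. Since $f^m$ and $k^n$ have infinite order, the ping-pong lemma for free products gives $\langle f^m,k^n\rangle=\langle f^m\rangle*\langle k^n\rangle\cong\Z*\Z$, a free group of rank $2$.

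I expect the main obstacle to be the escape lemma \emph{together with} the requirement that the four regions be simultaneously disjoint and wide enough to satisfy it: widening them aids the dynamics but endangers disjointness, so the argument hinges on locating the exact configuration inside the apartment (or small union of apartments) spanned by the two eigen-frames --- an explicit but fiddly $\widetilde{A}_2$ computation, and the reason the exponent is $3$ rather than $2$. A combinatorial alternative, matching the simplicial-retraction proof announced in the abstract, would sidestep the $\CAT(0)$ estimates: fix one apartment $\mathcal A$ and a chamber $\mathfrak c$ of $\partial_\infty X$, retract everything onto $\mathcal A$ by the simplicial retraction $\rho_{\mathcal A,\mathfrak c}$ (type-preserving, non-expanding, and compatible with the Busemann function at $\mathfrak c$), run the same ping-pong with honest sectors inside the flat $\mathcal A$, and use $\rho_{\mathcal A,\mathfrak c}$ to transport the containments and disjointness back to all of $X$.
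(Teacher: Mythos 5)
Your overall framework — realise $f$ and $k$ as regular hyperbolic isometries of the $\widetilde{A}_2$ building for $\SL_3$ over a completion of $\Q(t)$, locate their axes and apartments, and play ping-pong with sector-like sets — is the paper's framework too, and you correctly identify the crux as the trade-off between disjointness and the escape estimate, with the exponent $3$ arising from that trade-off. But the proposal leaves precisely that crux unresolved, and one of its load-bearing claims is misleading. You assert that the four ideal points being \emph{distinct} ``is precisely what makes a disjoint such choice possible.'' What the paper proves (Lemma~\ref{lem:ends_opp}) is that $\xi_f^\pm,\xi_k^\pm$ are pairwise \emph{opposite} in $X^\infty$ — a far stronger statement — and the central obstacle is that, despite this opposition at infinity, the apartments $\Sigma_f$ and $\Sigma_k$ meet in the single vertex $v$ (Lemma~\ref{lem:apts_meet}) and the germs of $A_f$ and $A_k$ there make an angle of only $2\pi/3$, not $\pi$. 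Because of this, sectors at $v$ towards the four ideal points are \emph{not} disjoint in the building (they share $v$ and adjacent faces), and the local-opposition hypothesis of the Strong Schottky Lemma of Alperin--Farb--Noskov fails. The paper's fix is to push the ping-pong sets one step out along each axis: the metric proof uses preimages under projection to $A_g$ of the \emph{open} rays $(v_g^\pm,\xi_g^\pm)$ with $v_g^\pm=g^{\pm1}.v$, and the simplicial proof uses retraction-preimages of \emph{shifted} sectors $\sector{T}_g=g.\sector{S}_g$ with tip $g.v$. This one-step shrinkage is what makes disjointness true and what then forces the escape estimate to need $\abs{n}>2$. Your proposal never makes this shift, so as stated the regions $\sector{S}_f^\pm,\sector{S}_k^\pm$ at a common tip would not actually be disjoint, and the escape lemma ``three steps clear the junction'' remains an unproved assertion (indeed Remark~\ref{rmk:metric_limitations} shows that using closed rays at $v_g^\pm$ — i.e.\ only one step of shrinkage — already breaks disjointness, so the geometry is genuinely delicate).

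Two smaller points. First, you take the $t$-adic valuation and $K=\Q((t))$, whereas the paper uses the valuation at infinity (uniformiser $t^{-1}$); these give non-conjugate embeddings of $\SL_3(\Z[t,t^{-1}])$ and would lead to different explicit computations of frames and apartments, so one cannot assume the same intersection pattern without redoing Lemmas~\ref{lem:apts_meet}--\ref{lem:ends_opp}. Second, your suggested simplicial variant uses a \emph{single} retraction $\rho_{\mathcal A,\mathfrak c}$ to transport everything; the paper's simplicial proof needs four different retractions $\rho_g=\rho_{\Sigma_g,C_g}$, one per generator, because disjointness is detected by the building-theoretic projection onto $v$ and the ping-pong containment is checked via Weyl distances across distinct apartments $\Sigma_{f^{\pm1},k^{\pm1}}$ — no single base chamber sees all of this. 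Also, combinatorial sectors and Busemann sub-level sets are not ``equivalent'': a sector is a much thinner set than a horoball, and the paper's ping-pong sets are neither; they are the (much larger) full preimages under $p_g$ or $\rho_g$.
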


Alperin has informed us that he has also calculated that some powers of~$f$ and~$k$ generate a free group but did not publish his proof. Alperin, Farb and Noskov \cite{alperin02} give a condition under which a pair of hyperbolic isometries~$f$ and~$k$ of a $\CAT(0)$ space~$X$ generate a copy of $F_2$. The condition includes the requirement that, if axes $A_f$ and $A_k$, of $f$ and $k$ respectively, meet at a single vertex~$v$, then the angles between~$A_f$ and $A_k$ at $v$ must be at least $\pi$. In case $X$ is a euclidean building this condition translates to a statement about opposition in the spherical building that is the link of $v$.

\begin{figure}[ht]
\centering
\includegraphics{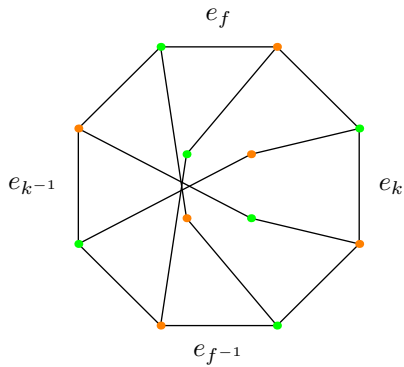}
\caption{Part of the link of $v$. For each of the elements $g \in \{f,k,f\I,k\I\}$ the vertex $v$ is moved by $g$ to a vertex adjacent to the edge labeled $e_g$. Note that each edge spans a regular triangle with $v$, so that the angle between the barycenter of $e_{f^\pm}$ and $e_{k^\pm}$ is $2\pi/3$.}
\label{fig:link_of_v}
\end{figure}

Unfortunately this local opposition requirement is not satisfied by the particular~$f$ and~$k$ of interest for the $4$-strand Burau representation; see Figure~\ref{fig:link_of_v}. The local angles are $2\pi/3$ rather than $\pi$, and this proved to be a crucial impediment to the methods used in \cite{alperin02}. In the present work we show that, while $A_f$ and $A_k$ have angles of $2\pi/3$ locally at $v$, they nonetheless behave like opposites shortly after departing from $v$. It follows that some powers of $f$ and $k$ have to span a free group, and inspection shows that the least powers deducible from this observation are the cubes.

It is worth noting that reducing the $4$-strand Burau representation modulo $2$ or $3$ leads to a representation that is known not to be faithful \cite{cooper97,cooper98}. Therefore, any proof of faithfulness of the Burau representation should use characteristic $0$ or at least large characteristic in an essential way, however:

\begin{nremark}
The proof of the theorem is independent of characteristic $0$. In particular, the theorem also applies to the representations in $\SL_3(\F_p[t,t^{-1}])$ obtained by reducing the Burau representation mod $p$.
\end{nremark}

\medskip

In Section~\ref{sec:buildings} we provide some general background on buildings and their isometries. Facts related to the specific isometries we are interested in are established in Section~\ref{sec:special_case}. The theorem is proved in Section~\ref{sec:pong} using two different techniques, one a metric approach in the spirit of \cite{alperin02} and the other a simplicial approach using building retractions.

\subsection*{Acknowledgments}

We are grateful to Roger Alperin for introducing us to this problem, and for helpful discussions and suggestions. Both authors were supported by the SFB~878 in M\"unster and the second named author also by the SFB~701 in Bielefeld, and this support is gratefully acknowledged.


\section{Buildings and isometries}\label{sec:buildings}

We recall some definitions about $\CAT(0)$ spaces from \cite{bridson99} and about buildings from \cite{abramenko08}.
The $\CAT(0)$ inequality expresses that geodesic triangles in a metric space are at most as thick as their comparison triangles in euclidean space. It gives a way to characterize $\CAT(0)$ spaces as those geodesic metric spaces that have non-positive curvature; for a definition see \cite[Chapter~II.1]{bridson99}. One feature of $\CAT(0)$ spaces is that they have a well defined visual boundary $X^\infty$ \cite[Chapter~II.8]{bridson99}.

Prominent examples of $\CAT(0)$ spaces include symmetric spaces of non-compact type and euclidean buildings. A \emph{euclidean building} is a simplicial complex $X$ equipped with a $\CAT(0)$ metric in which any two points share a top-dimensional flat, or \emph{apartment}, such that any two such apartments are isometric via an isometry fixing their intersection. Every apartment is a euclidean Coxeter complex of the same type. The maximal simplices are called \emph{chambers} and the simplices of codimension one are called \emph{panels}. See \cite[Chapter~11]{abramenko08} for more definitions and details. The boundary~$X^\infty$ of a euclidean building is a spherical building, in particular, it is itself a simplicial complex. Maximal simplices are called \emph{chambers at infinity}.

\medskip

\textbf{Hyperbolic isometries.} We will be interested in hyperbolic isometries of euclidean buildings. We recall the relevant definitions from \cite[Chapter~II.6]{bridson99}. Attached to an isometry $g$ of a $\CAT(0)$ space $X$ are the following data. The \emph{displacement function} $d_g \colon X \to \R$ is given by $d_g(x) = d(x,g.x)$. This gives rise to the \emph{translation length} $\abs{g} = \inf \{d_g(x) \mid x\in X\}$ and the \emph{min set} $\Min(g) = \{x \in X \mid d_g(x) = \abs{g}\}$. Now $g$ is called \emph{hyperbolic} if $\abs{g} > 0$ and $\Min(g) \ne \emptyset$, that is, if the translation length is positive and is attained at some point. Note that the metric on $X$ is convex and therefore $d_g$ is a convex function. In particular, $\Min(g)$, being a sublevel set of a convex function, is convex. The min set of a hyperbolic isometry has a very special structure.

\begin{proposition}\cite[Theorem~6.8]{bridson99}\label{prop:axes}
If $g$ is a hyperbolic isometry then $\Min(g)$ is isometric to a product $Y \times \R$, and the action is given by $g.(y,t) = (y,t+\abs{g})$.
\end{proposition}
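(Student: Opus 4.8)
The plan is to reduce the statement to two facts: every point of $\Min(g)$ lies on a $g$-invariant geodesic line (an \emph{axis}) along which $g$ translates by $\abs{g}$, and any two such axes are parallel. Granting these, the product decomposition falls out of the structure theory of the set of geodesic lines parallel to a fixed one, which is exactly what the $\R$-factor should be.

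\emph{Step 1: producing axes.} Fix $x\in\Min(g)$ and let $p$ be the midpoint of $[x,g.x]$. Since $d_g$ is convex and $d_g(x)=d_g(g.x)=\abs{g}$, we get $d_g(p)\le\abs{g}$, hence $d_g(p)=\abs{g}$, i.e.\ $p\in\Min(g)$. Then $d(p,g.p)=\abs{g}=\tfrac12\abs{g}+\tfrac12\abs{g}=d(p,g.x)+d(g.x,g.p)$, so $g.x$ lies on the geodesic $[p,g.p]$; thus the path obtained by concatenating the segments $[g^n.x,g^{n+1}.x]$, $n\in\Z$, does not bend at any $g^n.x$ and is a local geodesic. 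In a $\CAT(0)$ space local geodesics are geodesics, so this is a geodesic line $c_x$, manifestly $g$-invariant with $g$ translating it by $\abs{g}$.

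\emph{Step 2: parallelism and the splitting.} For $x,x'\in\Min(g)$ the function $t\mapsto d(c_x(t),c_{x'}(t))$ is convex and invariant under $t\mapsto t+\abs{g}$ (as $g$ translates both lines by $\abs{g}$), hence bounded, so $c_x$ and $c_{x'}$ are parallel and bound a flat strip by the Flat Strip Theorem \cite[Theorem~II.2.13]{bridson99}. Now fix one axis $c=c_{x_0}$. By \cite[Theorem~II.2.14]{bridson99} the union of all geodesic lines parallel to $c$ is a closed convex subspace isometric to $Y_c\times\R$, in which the lines parallel to $c$ are precisely the sets $\{y\}\times\R$; by Step~1 every point of $\Min(g)$ lies on such a line, so $\Min(g)\subseteq Y_c\times\R$. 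Since $g.c=c$, the isometry $g$ carries lines parallel to $c$ to lines parallel to $c$ and therefore respects the product, acting as $g.(y,t)=(\bar g(y),t+\tau(y))$ for an isometry $\bar g$ of $Y_c$ and a function $\tau$ (the $\R$-coordinate is translated positively because $g$ does so along $c$, and $\Min(g)$ lies in the component of $c$). Consequently $d_g(y,t)=\sqrt{d_{Y_c}(y,\bar g(y))^2+\tau(y)^2}$ is independent of $t$, so $\Min(g)=Y\times\R$ with $Y\defeq\{y\in Y_c\mid (y,0)\in\Min(g)\}$. Finally, for $y\in Y$ the axis through $(y,t)$ is the $g$-invariant line $\{y\}\times\R$, which forces $\bar g(y)=y$ and then $\tau(y)=\abs{g}$; hence $g.(y,t)=(y,t+\abs{g})$, as claimed.

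The main obstacle is Step~1: upgrading the broken geodesic through the $g$-orbit of $x$ to an honest axis, where both the convexity of $d_g$ and the equality of local and global geodesics in $\CAT(0)$ spaces are genuinely used. If one wanted a self-contained account instead of citing \cite{bridson99}, the remaining substantial work would be reproving the Flat Strip Theorem and the product decomposition of the set of geodesics parallel to a line; the rest is bookkeeping with convexity.
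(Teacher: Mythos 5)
The paper does not prove this proposition at all --- it is quoted verbatim from Bridson--Haefliger, Theorem II.6.8, with a bare citation. Your argument is correct and is essentially a faithful reconstruction of that source's proof (convexity of $d_g$ to produce a local, hence global, geodesic axis through each point of $\Min(g)$; convexity-plus-periodicity to show any two axes are parallel; then the Flat Strip Theorem and the parallel-line splitting II.2.13--14 to obtain $Y\times\R$ and identify the $g$-action), so there is nothing to compare beyond noting the agreement.
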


Each set of the form $\{y\} \times \R$ in the proposition is called an \emph{axis} of $g$. The boundary of each axis consists of the \emph{attracting limit point} $\xi^+_g \defeq \lim\limits_{n \to \infty} g^n.x \in X^\infty$ and the \emph{repelling limit point} $\xi^-_g \defeq \lim\limits_{n \to -\infty} g^n.x \in X^\infty$ of $g$, whose definitions do not depend on the point $x$.

The structure theory alone is enough to make the following observation.

\begin{observation}[Min sets are apartments]\label{obs:min_sets_apts}
 Let $g$ be a type-preserving hyperbolic isometry of a euclidean building $X$ and assume that the limit points $\xi^\pm_g$ lie in the interior of chambers at infinity $C^\pm$. Then $\Min(g)$ is an apartment $\Sigma$, more precisely,~$\Sigma$ is the unique apartment that contains $C^\pm$ in its boundary.
\end{observation}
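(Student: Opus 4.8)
The plan is to recognize $\Sigma$ as the apartment canonically associated with the pair $(C^+,C^-)$, and then to prove $\Min(g)=\Sigma$ by two opposite inclusions.

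First I would check that $C^+$ and $C^-$ are \emph{opposite} chambers of the spherical building $X^\infty$. Indeed, any axis of $g$ is a geodesic line joining $\xi^-_g$ to $\xi^+_g$, so these limit points are antipodal in $X^\infty$; since each $\xi^\pm_g$ lies in the interior of $C^\pm$, antipodality of the two interior points forces the two chambers to be opposite. By the standard fact that two opposite chambers at infinity lie in a unique common apartment \cite[Chapter~11]{abramenko08}, there is a well-defined apartment $\Sigma$ with $C^\pm\subseteq\Sigma^\infty$; this is the claimed one.

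Next I would show $\Min(g)\subseteq\Sigma$. By Proposition~\ref{prop:axes} every point of $\Min(g)$ lies on an axis, i.e.\ on a geodesic line with endpoints $\xi^\pm_g$. A geodesic line of a euclidean building is contained in some apartment $\Sigma'$; its boundary $\Sigma'^\infty$ is a subcomplex of $X^\infty$ containing the interior point $\xi^+_g$ of $C^+$, hence contains all of $C^+$, and similarly $C^-$, so $\Sigma'=\Sigma$ by uniqueness. For the reverse inclusion I would show that $g$ preserves $\Sigma$ and acts on it as a translation: $g$ fixes $\xi^+_g$, an interior point of the simplex $C^+$ of $X^\infty$, so the induced simplicial automorphism of $X^\infty$ stabilizes $C^+$, and likewise $C^-$; hence $g.\Sigma$ is again an apartment with $C^\pm$ in its boundary, and uniqueness gives $g.\Sigma=\Sigma$. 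Now $g|_\Sigma$ is a type-preserving simplicial isometry of the euclidean Coxeter complex $\Sigma$, so its linear part lies in the finite Weyl group $W_0$; this linear part fixes $\xi^+_g$, an interior point of a chamber of $\Sigma^\infty$, and the only such element of $W_0$ is the identity, so $g|_\Sigma$ is a translation, necessarily nontrivial since $g$ is hyperbolic. A nontrivial translation of a euclidean space has $d_g$ constantly equal to $\abs{g}$, so $\Sigma\subseteq\Min(g)$; combining the two inclusions gives $\Min(g)=\Sigma$.

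The $\CAT(0)$ bookkeeping (limit points of axes are antipodal, $g$ fixes them, $d_g$ is constant on a flat on which $g$ translates) is routine. The weight rests on two building-theoretic inputs: that a pair of opposite chambers at infinity determines a unique apartment, so that $\Sigma$ is canonical; and that a type-preserving isometry of a euclidean Coxeter complex has linear part in the spherical Weyl group, so that a \emph{regular} axis direction (limit points interior to chambers) rules out a screw motion and forces $g|_\Sigma$ to be a pure translation. I expect the main point to nail down is the inclusion $\Min(g)\subseteq\Sigma$, which relies on the assertion that every geodesic line of a euclidean building is contained in an apartment.
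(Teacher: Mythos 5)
Your proof is correct, and it takes a genuinely different route from the paper's. For the inclusion $\Min(g)\subseteq\Sigma$ you rely on the fact that every geodesic line in a complete euclidean building lies in some apartment, combined with the uniqueness of the apartment bounded by a given pair of opposite chambers at infinity; the paper instead establishes this inclusion by direct inspection, noting that any chamber which meets $\Sigma$ but is not contained in it has interior points with displacement strictly greater than $\abs{g}$, and then invoking convexity of $\Min(g)$. For $\Sigma\subseteq\Min(g)$ you argue from first principles that $g$ stabilizes $\Sigma$ (because it fixes $C^\pm$) and that the linear part of $g|_\Sigma$, being a type-preserving element of the spherical Weyl group fixing the regular direction $\xi^+_g$, is trivial, so $g|_\Sigma$ is a translation; the paper instead derives $\Sigma\subseteq\Min(g)$ rather tersely from the claim that $\Min(g)$ is a convex subcomplex containing $C^\pm$ in its boundary. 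Your version is more conceptual, and its one nonelementary input (geodesic lines lie in apartments) is standard for Bruhat--Tits buildings, e.g.\ via Kleiner--Leeb or Parreau, but deserves an explicit citation since it carries most of the weight. One small point worth spelling out: the assertion that the translation $g|_\Sigma$ has $d_g\equiv\abs{g}$ on $\Sigma$ uses the already-proven $\Min(g)\subseteq\Sigma$ to identify the translation length on $\Sigma$ with the global $\abs{g}$; you do have the two inclusions in the right order, but the dependency is easy to miss.
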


\begin{proof}
 First note that $C^\pm$ are opposite chambers because $\xi^\pm_g$ are opposite ends of an axis. Since $g$ acts by metric and cellular automorphisms, $\Min(g)$ has to be a convex subcomplex of $X$ that contains the chambers $C^\pm$ in its boundary. Therefore $\Sigma \subseteq \Min(g)$ and $g$ has to act as a translation on $\Sigma$. On the other hand, direct inspection shows that the interior points of chambers meeting $\Sigma$ but not contained in it are moved farther than $\abs{g}$, and by convexity of $\Min(g)$ this is enough to conclude that $\Min(g) \subseteq \Sigma$.
\end{proof}

The importance of euclidean buildings stems from the fact that they arise naturally from groups over valued fields. More precisely if $k$ is a field with a non-archimedean valuation $\nu$ and $\GroupScheme$ is a reductive $k$-group then there is an associated Bruhat--Tits building $X$ on which $\GroupScheme(k)$ acts by cellular isometries \cite{bruhat72,bruhat84}. We will only need the case of $\GroupScheme = \SL_n$ where an explicit description in terms of lattices is available; see \cite[Chapter~9]{ronan89} or \cite[Section~6.9]{abramenko08}. In our case $k = \Q(t)$ with~$\nu$ the valuation at infinity, so that the valuation ring is $\Q[t\I]$ and a uniformizing element is $\pi = t^{-1}$.

\medskip

\textbf{Galleries, Weyl distance, projections.} A sequence of successively adjacent chambers is called a \emph{gallery}. Recording the types of panels crossed by a gallery defines an element of the associated Coxeter group. In this way, to any two chambers $C$ and $D$ one can assign a \emph{Weyl distance} $\delta(C,D)$, which is the element associated to a minimal gallery connecting $C$ to $D$.

A key property of buildings is that given a simplex $A$ and a chamber $C$, there is a unique chamber $D \ge A$ such that every minimal gallery from $C$ to a chamber containing $A$ has to pass through $D$. This chamber $D$ is called \emph{(building theoretic) projection} of $C$ onto $A$.

\medskip

\textbf{Retractions.} The axiom that in a building any two chambers share an apartment gives rise to the important notion of the \emph{retraction} of a building $X$ onto an apartment~$\Sigma$, based at a chamber $C \subseteq \Sigma$. It is a continuous map $\rho_{\Sigma,C} \colon X \to \Sigma$, characterized by the property that it fixes $C$ pointwise and is a simplicial isomorphism onto $\Sigma$ when restricted to any apartment containing $C$ \cite[Definition~4.38]{abramenko08}.

The retraction $\rho_{\Sigma,C}$ preserves distances to points in $C$, and in general is distance non-increasing. Combinatorially speaking, the image under $\rho_{\Sigma,C}$ of a minimal gallery~$\Gamma$ to~$C$ is a gallery of the same type as $\Gamma$. In particular the Weyl distance $\delta(D,C)$ from a chamber $D$ to $C$ equals $\delta(\rho_{\Sigma,C}(D),C)$.

\medskip

\textbf{Roots, walls, sectors.} A \emph{root} in an apartment is a half-space that is also a subcomplex. The boundary $\partial \alpha$ of a root is a \emph{wall}. A \emph{sector} is an intersection of roots whose visual boundary is a chamber at infinity. It is a simplicial polyhedron and we call the unique vertex of that polyhedron the \emph{tip}. There is a unique chamber in $\sector{S}$ containing the tip.


\section{The isometries of interest for the Burau representation}\label{sec:special_case}

Consider the euclidean building $X$ associated to $\SL_3(\Q(t))$ with respect to the valuation $\nu$ at infinity. The two isometries that we are interested in are given by the matrices
$$f \defeq \begin{pmatrix}
            t & 0 & 0 \\
            0 & 1 & 0 \\
            0 & 0 & t\I
           \end{pmatrix} \text{ and }
  k \defeq \begin{pmatrix}
            0 & -1 - t & -t\I - 1 - t \\
            0 & t\I + 1 + t & t^{-2} + t\I + 1 + t \\
            1 & 0 & 0
           \end{pmatrix}\text{.}
$$
These matrices are conjugate, for example $k=sfs\I$ where
$$s \defeq \begin{pmatrix}
            1 & 1 & t\I \\
            -(t^{-2} + 1) & -(t\I + 1) & -(t^{-2} + 1) \\
            t\I & 1 & 1
           \end{pmatrix}\text{.}$$
Our $f$ and $k$ are simultaneous conjugates of the matrices denoted $f$ and $k$ in \cite{alperin02}, namely ours are obtained from those via conjugating by $\left(\begin{smallmatrix} 0 & 0 & 1 \\ t\I & 0 & 0 \\ 0 & 1 & 0\end{smallmatrix}\right)$ from the left. (Our~$s$ is unrelated to the matrix denoted $s$ in \cite{alperin02}, but that is insignificant.) Those in turn were obtained from two matrices described in \cite[Theorem~3.19]{birman74} (note the relevant erratum in \cite{birman75}). For completeness, we have calculated that to obtain our $f$ and $k$ from the (post-erratum) matrices in \cite{birman74}, call them $a$ and $b$, one replaces $t$ by $-t$ and then conjugates from the left by $\left(\begin{smallmatrix} 0 & 1 & t\I - 1 \\ 0 & -t^{-2} - 1 & 0 \\ t-1 & 1 & 0\end{smallmatrix}\right)$, after which $a$ becomes $f\I$ and $b$ becomes $k$.

\medskip

In particular, the $4$-strand Burau representation is faithful if and only if $\langle f,k\rangle \cong F_2$.

\subsection{Apartments for the isometries}\label{sec:apts_for_isoms}

In this section we describe some apartments that are distinguished by $f$ and $k$.

The min set for $f$ (respectively $k$) is a euclidean apartment $\Sigma_f$ (respectively $\Sigma_k$) by Observation~\ref{obs:min_sets_apts}. Since $f$ is diagonal, $\Sigma_f$ is actually the standard apartment defined by the frame $\Frame_f \defeq \{[e_1],[e_2],[e_3]\}$. Then since $k=sfs\I$, we have $\Sigma_k=s\Sigma_f$, so $\Sigma_k$ is defined by the frame $\Frame_k \defeq \{[se_1],[se_2],[se_3]\}$.

In \cite[Section~4.1]{alperin02} the two apartments of interest are shown to intersect at precisely one vertex. The proof given there has a minor sign error, and since we are dealing with different matrices anyway, we will sketch a proof here. Recall that $\pi=t\I$ is a uniformizing element.

\begin{lemma}\label{lem:apts_meet}
 $\Sigma_f\cap\Sigma_k$ consists only of the lattice class $v\defeq [[\Ocal e_1 + \Ocal e_2 + \Ocal e_3]]$.
\end{lemma}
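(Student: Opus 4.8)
The plan is to argue inside the lattice model of $X$, in which vertices are homothety classes $[[L]]$ of $\Ocal$-lattices $L \subseteq \Q(t)^3$, where $\Ocal$ is the valuation ring of $\nu$ (a discrete valuation ring with uniformizer $\pi = t\I$). Writing $D_{\mathbf a} \defeq \diag(\pi^{a_1}, \pi^{a_2}, \pi^{a_3})$ for $\mathbf a = (a_1, a_2, a_3) \in \Z^3$, the vertices of the standard apartment $\Sigma_f$ are exactly the classes $[[D_{\mathbf a}\Ocal^3]]$, two tuples $\mathbf a, \mathbf a'$ giving the same vertex precisely when $\mathbf a - \mathbf a' \in \Z \cdot (1,1,1)$; and since $\Sigma_k = s\Sigma_f$, the vertices of $\Sigma_k$ are the classes $[[sD_{\mathbf b}\Ocal^3]]$, $\mathbf b \in \Z^3$. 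Because $\Sigma_f \cap \Sigma_k$ is a simplicial subcomplex it is determined by its vertex set, so it suffices to prove that $v = [[\Ocal^3]]$ is the only lattice class lying in both apartments.

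The key reduction is a membership criterion: after absorbing the homothety scalar into $\mathbf b$, a vertex $[[D_{\mathbf a}\Ocal^3]]$ of $\Sigma_f$ lies in $\Sigma_k$ if and only if $D_{\mathbf a}\Ocal^3 = s D_{\mathbf b}\Ocal^3$ for some $\mathbf b \in \Z^3$, that is, $g_{\mathbf a, \mathbf b} \defeq D_{\mathbf a}^{-1} s D_{\mathbf b} \in \GL_3(\Ocal)$. The $(i,j)$-entry of $g_{\mathbf a, \mathbf b}$ is $\pi^{b_j - a_i} s_{ij}$, so this matrix has all entries in $\Ocal$ exactly when $b_j - a_i \ge -\nu(s_{ij})$ for all $i, j$, and---since $\Ocal$ is a DVR---its determinant is then a unit exactly when $\nu(\det g_{\mathbf a, \mathbf b}) = \sum_j b_j - \sum_i a_i + \nu(\det s) = 0$.

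The two computations that drive everything are
$$\bigl(\nu(s_{ij})\bigr)_{i,j} = \begin{pmatrix} 0 & 0 & 1 \\ 0 & 0 & 0 \\ 1 & 0 & 0 \end{pmatrix} \qquad\text{and}\qquad \det s = (1 - t\I)^3 \text{,}$$
the latter of $\nu$-value $0$, so that the determinant condition reads simply $\sum_j b_j = \sum_i a_i$. The entrywise inequalities become $b_1 \ge \max(a_1, a_2, a_3 - 1)$, $b_2 \ge \max(a_1, a_2, a_3)$ and $b_3 \ge \max(a_1 - 1, a_2, a_3)$; in particular $b_1 \ge a_2$, $b_2 \ge a_1$, $b_3 \ge a_3$, so $\sum_j b_j \ge a_1 + a_2 + a_3 = \sum_j b_j$ and hence all three of these must be equalities. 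This forces $\max(a_1, a_2, a_3) = a_1$, $\max(a_1, a_2, a_3 - 1) = a_2$ and $\max(a_1 - 1, a_2, a_3) = a_3$, which chain to $a_1 \ge a_3 \ge a_2 \ge a_1$; thus $a_1 = a_2 = a_3$ and the only common vertex is $[[\pi^{a_1}\Ocal^3]] = v$. Conversely $v$ really does lie in $\Sigma_f \cap \Sigma_k$, since $s \in \GL_3(\Ocal)$---its determinant is the unit $(1 - t\I)^3$---so $s$ fixes $v$ while carrying $\Sigma_f$ onto $\Sigma_k$.

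The only genuine obstacle is getting the two displayed computations right: the determinant of $s$ and the valuations of its entries must be checked carefully---this is exactly where the argument in \cite{alperin02} had its sign slip---and after that the proof is just bookkeeping in the lattice model together with a one-line inequality chase. (One could instead exploit that $\Sigma_f \cap \Sigma_k$ is convex, hence an intersection of walls, and read off the answer from the relative positions at infinity of $\xi^\pm_f$ and $\xi^\pm_k$; but the lattice computation is the most direct route.)
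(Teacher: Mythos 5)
Your proof is correct and follows essentially the same route as the paper: work in the lattice model, translate membership of a vertex of $\Sigma_f$ in $\Sigma_k$ into the condition $D_{\mathbf a}^{-1} s D_{\mathbf b} \in \GL_3(\Ocal)$, use $\nu(\det s)=0$ together with the entrywise valuation inequalities, and conclude $a_1=a_2=a_3$. The only cosmetic difference is in the bookkeeping of the final inequality chase (you pick the permutation $(1\,2)$ and then use the $\max$ constraints, while the paper uses the diagonal permutation and then pairs off-diagonal entries), but these are equivalent.
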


\begin{proof}
 We are looking for integers $a_i,b_i$ for $i=1,2,3$ such that
 $$s\begin{pmatrix}
     t^{-b_1} & 0 & 0 \\
     0 & t^{-b_2} & 0 \\
     0 & 0 & t^{-b_3}
    \end{pmatrix} = 
    \begin{pmatrix}
     t^{a_1} & 0 & 0 \\
     0 & t^{a_2} & 0 \\
     0 & 0 & t^{a_3}
    \end{pmatrix}m
$$
for some $m\in\GL_3(\Ocal)$. Then we will have $s[[\pi^{b_1}\Ocal e_1 + \pi^{b_2}\Ocal e_2 + \pi^{b_3}\Ocal e_3]]=[[\pi^{-a_1}\Ocal e_1 + \pi^{-a_2}\Ocal e_2 + \pi^{-a_3}\Ocal e_3]] \in \Sigma_f \cap \Sigma_k$, and this is a precise characterization of the intersection. Since $\nu(\det(s))=0$ and $\nu(\det(m))=0$, we obtain $\sum a_i + \sum b_i = 0$. Also, if $s_{ij}$ is the $i,j$ entry of $s$ we obtain inequalities $a_i+\nu(s_{ij})+b_j \ge 0$ for all~$i,j$. Now we get $0=(a_1+b_1)+(a_2+b_2)+(a_3+b_3)\ge 0$ by looking at the diagonal of $s$, implying that $(a_1+b_1)=0$, $(a_2+b_2)=0$ and $(a_3+b_3)=0$. A similar trick shows that $(a_1+b_2)=0$ and $(a_2+b_1)=0$, and another instance of the trick shows that $(a_2+b_3)=0$ and $(a_3+b_2)=0$. This tells us that any solution must satisfy $a_1=a_2=a_3=-b_1=-b_2=-b_3$. We conclude that $s.v=v$ is the only vertex in~$\Sigma_f\cap\Sigma_k$.
\end{proof}

Axes for $f$ point to opposite points $\xi_f^\pm$ in the apartment at infinity $\Sigma_f^\infty$. A similar statement holds for $k$. Let $A_f$ be the axis for $f$ containing $v$, and similarly define~$A_k$ containing $v$. As seen in \cite{alperin02}, the germs of these axes at $v$ are at an angle of $2\pi/3$ in $\lk(v)$, and so the Strong Schottky Lemma of \cite{alperin02} does not apply. Asymptotically, however, we have the following fact.

\begin{lemma}\label{lem:ends_opp}
 The ends $\xi_f^\pm$ and $\xi_k^\pm$ are all pairwise opposite in the building at infinity~$X^\infty$.
\end{lemma}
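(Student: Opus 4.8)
The plan is to reduce the assertion entirely to an explicit computation in a single apartment. Opposition of chambers at infinity can be checked inside any apartment containing both; since $\Sigma_f$ contains $\xi_f^+$ and $\xi_f^-$ in its boundary, and likewise for $\Sigma_k$, the only new work concerns the mixed pairs $\xi_f^{\pm}$ versus $\xi_k^{\pm}$. First I would record the attracting and repelling ends of $f$ and $k$ concretely. Because $f$ is diagonal with entries $t,1,t\I$, the apartment $\Sigma_f$ is the standard apartment on the frame $\Frame_f=\{[e_1],[e_2],[e_3]\}$, and the axis direction of $f$ corresponds to the cocharacter $\diag(t,1,t\I)$; thus $\xi_f^{+}$ is the end determined by the ordering $[e_1] \prec [e_2] \prec [e_3]$ of the frame (in terms of the filtration by the valuation), and $\xi_f^{-}$ is the opposite end, determined by the reverse ordering. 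Applying $s$, the ends $\xi_k^{\pm}$ are the corresponding ends of the frame $\Frame_k=\{[se_1],[se_2],[se_3]\}$.

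Next I would translate "opposite'' into the lattice model of the $\SL_3$ building. Two chambers at infinity of the $\SL_3(\Q(t))$ building, given by two complete flags of $\Q(t)$-subspaces (equivalently by two ordered frames up to the appropriate equivalence), are opposite precisely when the two flags are in general position, i.e.\ transverse: the $i$-dimensional space of one flag meets the $(3-i)$-dimensional space of the other trivially for each $i$. So the claim $\xi_f^{\epsilon}$ opposite $\xi_k^{\eta}$ for $\epsilon,\eta\in\{+,-\}$ becomes: the flag $\langle e_1\rangle \subset \langle e_1,e_2\rangle$ (or its reverse) is transverse to the flag $\langle se_1\rangle \subset \langle se_1, se_2\rangle$ (or its reverse). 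Since reversing a flag just permutes which subspace plays which role, all four mixed cases amount to finitely many non-vanishing conditions on minors built from the columns of $s$ and the standard basis vectors — concretely, that certain $1\times 1$, $2\times 2$ and $3\times 3$ determinants formed from entries of $s$ are nonzero in $\Q(t)$. I would simply list these minors and check they are nonzero; for instance $\det(s)\neq 0$ handles the full-flag/point pairings, the lower-left $2\times2$ and $2\times1$ blocks of $s$ handle the pairings involving $[se_1]$ and $\langle e_3\rangle$, and so on. This is the computational heart of the proof, but it is elementary: it is a check that a short explicit list of polynomials in $t$ does not vanish identically.

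The main obstacle I anticipate is purely bookkeeping: correctly matching up which subspace of the $f$-flag must be transverse to which subspace of the $k$-flag in each of the four sign combinations, and making sure one has not silently used a fact special to characteristic $0$ (the remark in the introduction insists the argument be characteristic-free, so each nonvanishing minor should be checked as a nonzero polynomial with, say, a nonzero coefficient, rather than by evaluating at a transcendental $t$). A secondary point worth stating cleanly is why checking transversality of the associated flags of $\Q(t)$-vector spaces is the same as checking opposition at infinity in the building: this follows because the frames $\Frame_f$ and $\Frame_k$ are defined over $\Q(t)$ and the ends $\xi_f^{\pm},\xi_k^{\pm}$ lie in the interiors of chambers at infinity corresponding to complete flags, so the combinatorial opposition relation in the spherical building $X^\infty$ is exactly the linear-algebra transversality of the corresponding flags. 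With that identification in place, the lemma follows from the minor computation.
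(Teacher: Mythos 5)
Your proposal follows essentially the same route as the paper: identify $\xi_f^\pm$ and $\xi_k^\pm$ with the barycenters of the chambers at infinity corresponding to the flags $\langle e_1\rangle\subset\langle e_1,e_2\rangle$, $\langle e_3\rangle\subset\langle e_2,e_3\rangle$, and their $s$-translates, then verify opposition of the mixed pairs by transversality of these flags. The paper phrases the final step as ``inspecting the columns of $s$'' (e.g.\ $se_1\notin[e_1,e_2]$ and $e_1\notin[se_1,se_2]$), which is exactly the nonvanishing-of-minors check you describe, and your observation that this is a characteristic-free condition on polynomials in $t$ matches the remark in the introduction.
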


\begin{proof}
 We can calculate $\xi_f^\pm$ by looking at the limits of $f^n.v$ and $f^{-n}.v$ as $n\to\infty$. Clearly $f^n.v=[[\pi^{-n}\Ocal e_1 + \Ocal e_2 + \pi^n \Ocal e_3]]$. For $n\ge 0$ these lattice classes are of the form $[[\pi^a\Ocal e_1+\pi^b\Ocal e_2 + \pi^c \Ocal e_3]]$ with $a\le b\le c$, and for $n\le 0$ they are of that form with $a\ge b\ge c$. These rules define sectors in $X$, with tip $v$; the former has as its chamber at infinity the fundamental chamber $[e_1]<[e_1,e_2]$, and the latter has $[e_3]<[e_2,e_3]$. This is explained, e.g., in the proof of Proposition~11.105 in \cite{abramenko08}.
 
 In particular $\xi_f^+$ is the barycenter of the chamber $[e_1]<[e_1,e_2]$ at infinity, and $\xi_f^-$ is the barycenter of $[e_3]<[e_2,e_3]$. Of course then $\xi_k^\pm$ are barycenters of chambers as well, namely $[se_1]<[se_1,se_2]$ and $[se_3]<[se_2,se_3]$. Inspecting the columns of $s$, we see that $se_1$ is not contained in $[e_1,e_2]$, and also that $e_1$ is not contained in $[se_1,se_2]$, so the chambers $[e_1]<[e_1,e_2]$ and $[se_1]<[se_1,se_2]$ are opposite. Similar arguments establish opposition for the other pairs of chambers.
\end{proof}

Since $\xi_f^+$ and $\xi_k^+$ are barycenters of opposite chambers, they are contained in a unique spherical apartment $\Sigma_{f,k}^\infty$ bounding a euclidean apartment $\Sigma_{f,k}$. Knowing a pair of opposite chambers, it is easy to calculate a frame $\Frame_{f,k}$ for $\Sigma_{f,k}^\infty$ -- two of the lines are $[e_1]$ and $[se_1]$, and the third line is $[e_1,e_2]\cap[se_1,se_2]$. The other three frames can be calculated similarly. We collect the calculations in Table~\ref{table:frames}.

\begin{table}[h!]
\centering
\begin{tabular}{r|lll}
 Frame & ~ & Defining Lines \\ \hline
 $\Frame_{f,k}$ & $[e_1]$ & $[(t\I-1)e_1+(1-t\I)e_2]$ & $[te_1-(t\I + t)e_2 + e_3]$ \\ \hline
 $\Frame_{f,k\I}$ & $[e_1]$ & $[(1-t)e_1+(1-t\I)e_2]$ & $[t\I e_1-(t^{-2}+1)e_2+e_3]$ \\ \hline
 $\Frame_{f\I,k}$ & $[e_3]$ & $[(1-t\I)e_2+(1-t)e_3]$ & $[e_1-(t^{-2}+1)e_2+t\I e_3]$ \\ \hline
 $\Frame_{f\I,k\I}$ & $[e_3]$ & $[(1-t\I)e_2+(t\I-1)e_3]$ & $[e_1-(t\I+t)e_2+te_3]$
\end{tabular}\vspace{0.05in}
\caption{Frames for the four apartments $\Sigma_{f^{\pm1},k^{\pm1}}$.}
\label{table:frames}
\end{table}

Note that in some cases our chosen representative vector for the lines in the frame is not the one immediately obtained from $s$; we have attempted to make the frames look as simple as possible.

We can also read off transformations $b_{f^{\pm1},k^{\pm1}}$ taking the standard frame $\Frame_f$ to $\Frame_{f^{\pm1},k^{\pm1}}$, namely
$$b_{f,k}=\begin{pmatrix}
           1 & t\I-1 & t \\
           0 & 1-t\I & -(t\I + t) \\
           0 & 0 & 1
          \end{pmatrix}\text{, }
  b_{f,k\I}=\begin{pmatrix}
           1 & 1-t & t\I \\
           0 & 1-t\I & -(t^{-2}+1) \\
           0 & 0 & 1
          \end{pmatrix}\text{, }$$

$$b_{f\I,k}=\begin{pmatrix}
           1 & 0 & 0 \\
           -(t^{-2} + 1) & 1-t\I & 0 \\
           t\I & 1-t & 1
          \end{pmatrix}\text{, and }
  b_{f\I,k\I}=\begin{pmatrix}
           1 & 0 & 0 \\
           -(t\I + t) & 1-t\I & 0 \\
           t & t\I-1 & 1
          \end{pmatrix}\text{.}
$$

\subsection{Apartments for pairs of isometries}\label{sec:apts_for_pairs}

\begin{figure}[ht]
\centering
\includegraphics{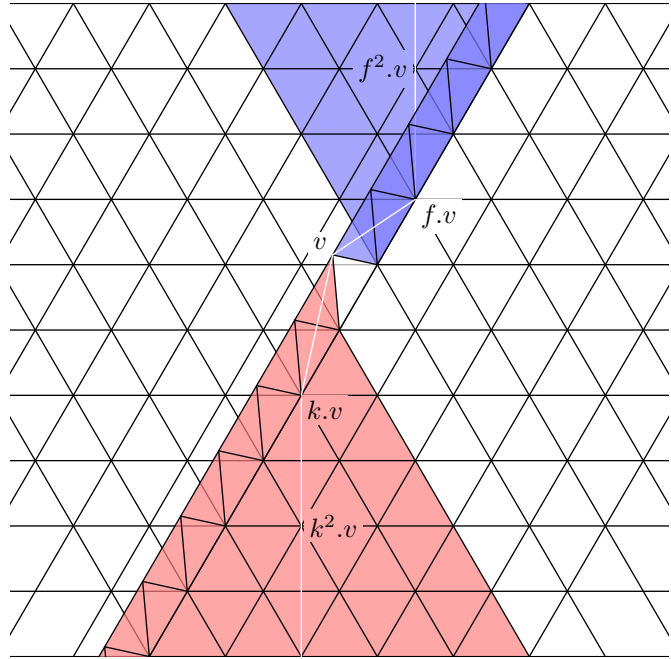}
\caption{The apartment $\Sigma_{f,k}$ plus an extra strip of chambers. The blue region indicates the part of the picture that also lies in $\Sigma_f$, and the red region indicates intersection of the picture with $\Sigma_k$. The white line in the blue part is $[v,\xi^+_f)$, and the white line in the red part is~$[v,\xi^+_k)$.}
\label{fig:sigma_fk}
\end{figure}

In this section we show that $v$ is not in any of the $\Sigma_{f^{\pm1},k^{\pm1}}$, but that for any $n>0$, we have that $f^n.v$ already lies in~$\Sigma_{f,k^{\pm1}}$, with similar statements for $f\I$, $k$ and $k\I$. By inspecting which lattice classes in, say, $\Sigma_{f,k}$ correspond to the points $f^n.v$ and $k^n.v$, we also obtain a helpful precise picture; see Figure~\ref{fig:sigma_fk}.

We know that $f^n.v=[[\pi^{-n}\Ocal e_1 + \Ocal e_2 + \pi^n\Ocal e_3]]$ for any $n$. This lies in $\Sigma_{f,k}$ if and only if there exist integers $a,b,c$ and a matrix $m\in\GL_3(\Ocal)$ such that
$$f^n=b_{f,k}\diag(\pi^a,\pi^b,\pi^c)m\text{.}$$

\begin{proposition}\label{prop:push_into_apts}
 Let $g=f^{\pm1}$ and $h=k^{\pm1}$ and let $n\ge 0$. Then $g^nv\in \Sigma_{g,h}$ if and only if $n>0$, and similarly $h^nv\in \Sigma_{g,h}$ if and only if $n>0$.
\end{proposition}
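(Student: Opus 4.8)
The plan is to reduce the statement to an explicit linear-algebra computation over the valuation ring $\Ocal = \Q[t\I]$, exactly as set up in the paragraph before the proposition. By the symmetry of the four frames in Table~\ref{table:frames} (and the conjugations relating them), it suffices to treat one representative case, say $g = f$, $h = k$, and to show that $f^n v \in \Sigma_{f,k}$ if and only if $n > 0$; the statement for $h^n v$ follows from the analogous computation using $\Frame_k$ in place of $\Frame_f$, or by applying the isometry $s$. So the concrete task is: determine for which $n \ge 0$ there exist integers $a,b,c$ and $m \in \GL_3(\Ocal)$ with $f^n = b_{f,k}\,\diag(\pi^a,\pi^b,\pi^c)\,m$, equivalently $\diag(\pi^{-a},\pi^{-b},\pi^{-c})\,b_{f,k}\I f^n \in \GL_3(\Ocal)$.

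First I would write out $b_{f,k}\I f^n$ explicitly. Since $b_{f,k}$ is upper triangular with $1$'s on the diagonal, its inverse is easy to compute, and $f^n = \diag(t^n,1,t^{-n})$, so $b_{f,k}\I f^n$ is an explicit matrix whose entries are Laurent polynomials in $t$ with coefficients depending on $n$. Next, for a candidate triple $(a,b,c)$, the condition that $\diag(\pi^{-a},\pi^{-b},\pi^{-c}) (b_{f,k}\I f^n)$ lies in $\GL_3(\Ocal)$ amounts to: every entry has non-negative $\nu$-valuation (recall $\nu(t) = -1$, so $\nu(\pi) = 1$ and $\Ocal$ consists of Laurent polynomials with no positive powers of $t$), and the determinant has valuation $0$. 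The determinant condition forces $a + b + c = n + 0 + (-n) = 0$ up to the unit $\det b_{f,k} = 1-t\I$ (whose valuation is $0$), so $a+b+c = 0$. The entrywise conditions give a finite system of inequalities $a_i + \nu((b_{f,k}\I f^n)_{ij}) \ge 0$ together with the requirement that the resulting integral matrix actually be invertible over $\Ocal$ (i.e.\ its reduction mod $\pi$ is invertible over the residue field $\Q$). I would solve this system: for $n > 0$ exhibit an explicit $(a,b,c)$ and $m$ that work (reading them off from the lattice-class description $f^n v = [[\pi^{-n}\Ocal e_1 + \Ocal e_2 + \pi^n \Ocal e_3]]$ and how it sits relative to the $\Frame_{f,k}$ coordinates); for $n = 0$, i.e.\ $v$ itself, show the inequalities have no simultaneous solution — this recovers and is consistent with Lemma~\ref{lem:apts_meet}, which already tells us $v \notin \Sigma_{f,k}$ since $v$ is the unique point of $\Sigma_f \cap \Sigma_k$ and $\Sigma_{f,k} \ne \Sigma_f, \Sigma_k$; in fact I would simply invoke Lemma~\ref{lem:apts_meet} plus the observation that $v \in \Sigma_{f,k}$ would put $v$ in an apartment whose boundary contains $\xi_f^+$ and $\xi_k^+$ but which is neither $\Sigma_f$ nor $\Sigma_k$, and then derive a contradiction with the single-vertex intersection — but the cleanest route is the direct valuation computation showing the $n=0$ system is infeasible while the $n>0$ system is feasible, since the borderline case $n=0$ versus $n=1$ is precisely where one extra power of $\pi$ appears in the relevant off-diagonal entries and tips the inequalities from unsolvable to solvable.

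The main obstacle I anticipate is bookkeeping rather than conceptual: one must carefully track the valuations of all nine entries of $b_{f,k}\I f^n$ as functions of $n$, identify the binding constraints, and — crucially — verify not just that an integral representative exists but that it is a \emph{unit} of $\GL_3(\Ocal)$, i.e.\ that after clearing denominators the matrix does not accidentally become singular modulo $\pi$. It is this integrality-of-the-inverse check (equivalently, that the reduction mod $\pi$ has rank $3$) that genuinely distinguishes $n = 0$ from $n \ge 1$: for $n = 0$ any choice of $(a,b,c)$ making the matrix integral will force its mod-$\pi$ reduction to drop rank (consistent with $v$ lying strictly between the two sectors), whereas for $n \ge 1$ the extra factor of $\pi$ in the appropriate column frees up the valuations enough to reach a genuinely invertible integral matrix. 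I would organize the proof to first do the generic $n \ge 1$ case by exhibiting explicit $a,b,c,m$, then handle $n=0$ by a short contradiction argument, and finally remark that the three other frames $\Frame_{f,k\I}, \Frame_{f\I,k}, \Frame_{f\I,k\I}$ and the roles of $h^n v$ are handled identically (or by symmetry), completing the proof.
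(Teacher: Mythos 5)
Your proposal follows essentially the same route as the paper: set up $f^n = b_{f,k}\,\diag(\pi^a,\pi^b,\pi^c)\,m$, extract $a+b+c=0$ from the determinant, derive a system of valuation inequalities from integrality, and solve to find that exactly $n\ge 1$ is feasible (the paper writes the inequalities for $m\I = f^{-n}b_{f,k}\diag(\pi^a,\pi^b,\pi^c)$ while you phrase them for $m$, but these are mirror images of the same computation). Two small cautions: the suggested shortcut of deducing $v\notin\Sigma_{f,k}$ from Lemma~\ref{lem:apts_meet} does not obviously work, since $\Sigma_f\cap\Sigma_k=\{v\}$ places no constraint on whether $v$ lies in the third apartment $\Sigma_{f,k}$ — so you are right to prefer the direct computation; and the separate ``mod-$\pi$ rank $3$'' check is redundant, because once $m$ has entries in $\Ocal$ and $\nu(\det m)=0$ the adjugate formula already puts $m\I$ in $M_3(\Ocal)$, so $a+b+c=0$ plus integrality is the full condition.
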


\begin{proof}
 We will prove that $f^n.v\in \Sigma_{f,k}$ if and only if $n>0$, and the other seven cases work by parallel (though tedious) arguments. Suppose $f^n.v\in \Sigma_{f,k}$, say $f^n=b_{f,k}\diag(\pi^a,\pi^b,\pi^c)m$. Taking valuations of determinants, we get $a+b+c=0$. We also obtain constraining inequalities from the fact that the entries of $m$ must all have non-negative valuation, and hence too the entries of $f^{-n}b_{f,k}\diag(\pi^a,\pi^b,\pi^c)$. We can arrange these inequalities in a matrix, namely
 $$\begin{pmatrix}
    n+a \ge 0 & n+b \ge 0 & n+c-1 \ge 0 \\
    - & b\ge 0 & c-1\ge 0 \\
    - & - & -n+c \ge 0
   \end{pmatrix}\text{.}
$$
From the determinant equality and the inequalities in the second row, we obtain $0=a+b+c \ge a+1$, and from the top-left inequality we then get $0\ge 1-n$, so $n\ge 1$. For the converse, note that if $n\ge 1$ then we can take $a=-n$, $b=0$, $c=n$.
\end{proof}

For $g\in\{f,k\}$ and $\varepsilon\in\{+,-\}$ let $v_g^\varepsilon$ be the vertex $g^{\varepsilon 1}.v$. We find that the ray $[v_f^+,\xi_f^+)$ is precisely the intersection of the axis $A_f$ with either apartment $\Sigma_{f,k^{\pm1}}$, with similar statements for the other rays and apartments, e.g., $[v_k^-,\xi_k^-)$ is the intersection of $A_k$ with $\Sigma_{f^{\pm1},k\I}$.


\section{Ping-Pong}\label{sec:pong}

We retain the definitions and setup from the previous section. In this section we prove the theorem using two different methods. In both cases, the last step is applying the well known Ping-Pong Lemma (cf.~\cite[Lemma~3.1]{alperin02}).

\begin{lemma}[Ping-Pong Lemma]\label{lem:pplemma}
 Let $\Gamma$ be a group acting on a set $X$. Let $g_1,g_2$ be elements of order at least~$3$. Suppose there are disjoint subsets $X_1,X_2$ of $X$ such that, for all $n\neq 0$ and for $i\neq j$, we have $g_i^n(X_j)\subseteq X_i$. Then $g_1$ and $g_2$ generate a copy of~$F_2$.
\end{lemma}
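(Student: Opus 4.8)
This is the classical two-table ping-pong argument; the plan is to show that the canonical homomorphism $\varphi\colon F_2\to\Gamma$ from the free group on generators $a,b$, given by $a\mapsto g_1$ and $b\mapsto g_2$, is injective — since its image is $\langle g_1,g_2\rangle$ this yields $\langle g_1,g_2\rangle\cong F_2$. Every nontrivial element of $F_2$ has a unique normal form $w=g_{i_1}^{n_1}\cdots g_{i_\ell}^{n_\ell}$ with $\ell\ge 1$, each $n_j\in\Z\setminus\{0\}$, and $i_j\ne i_{j+1}$, so it is enough to show each such product acts nontrivially on $X$. I would first note that $X_1$ and $X_2$ are nonempty (as holds in our applications; and if one of them were empty the inclusion hypotheses would force the other to be empty too) and record the immediate consequence that each $g_i$ has infinite order: for $n\ne 0$, the element $g_i^n$ sends the nonempty set $X_j$ with $j\ne i$ into the disjoint set $X_i$, hence $g_i^n\ne 1$. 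This means normal-form products can be manipulated in $\Gamma$ with no unexpected cancellation.

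The combinatorial core is the claim that if the first and last syllables of $w$ are powers of the same generator, say $i_1=i_\ell=1$, then $w(X_2)\subseteq X_1$ (and symmetrically $w(X_1)\subseteq X_2$ if $i_1=i_\ell=2$). I would prove this by peeling syllables off the right-hand end and following a point of $X_2$ through the word: $g_{i_\ell}^{n_\ell}(X_2)\subseteq X_1$ because $i_\ell=1$, then $g_{i_{\ell-1}}^{n_{\ell-1}}$ sends this into $X_2$ because $i_{\ell-1}=2$, and so on, alternating between the two tables, until the outermost syllable $g_1^{n_1}$ returns everything to $X_1$. In particular $w$ maps the nonempty set $X_2$ into the disjoint set $X_1$, so it moves at least one point of $X$ and $\varphi(w)\ne 1$. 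This disposes of every normal form whose extreme syllables involve the same generator, in particular of all length-one words $g_i^{n}$.

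The remaining case — extreme syllables in different generators, say $i_1=1$, $i_\ell=2$ — is the one requiring care, and is precisely where the order-at-least-$3$ hypothesis enters. I would reduce it to the previous case by conjugating: put $w'\defeq g_2^{c}\,w\,g_2^{-c}=g_2^{c}\,g_1^{n_1}\cdots g_{i_{\ell-1}}^{n_{\ell-1}}\,g_2^{\,n_\ell-c}$ and choose the integer $c$ so that the two new outer syllables $g_2^{c}$ and $g_2^{\,n_\ell-c}$ are both nontrivial in $\Gamma$; since $g_2$ has infinite order it suffices to take $c\notin\{0,n_\ell\}$, which is possible — note that for $g_2$ of order $2$ no such $c$ would exist, which is why order $2$ is excluded. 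One checks that no cancellation occurs at the two new joints, so $w'$ is again a normal form, now with both extreme syllables powers of $g_2$; the claim gives $\varphi(w')\ne 1$, and hence $\varphi(w)\ne 1$ because conjugation preserves nontriviality. The case $i_1=2$, $i_\ell=1$ is symmetric, conjugating by a power of $g_1$ instead. As every nontrivial element of $F_2$ then has nontrivial image, $\varphi$ is injective and the lemma follows. The only genuine obstacle is the bookkeeping in this last step — keeping the conjugated word reduced and ensuring the exponent can always be picked — but with order at least $3$ there is room to spare.
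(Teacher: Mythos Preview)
The paper does not supply a proof of this lemma; it is quoted as the well-known Ping-Pong Lemma with a reference to \cite[Lemma~3.1]{alperin02}. Your argument is the standard one and is correct.

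One small remark on the internal logic: once you have established (from nonemptiness of the $X_i$ together with the ping-pong hypothesis) that each $g_i$ has infinite order, the later aside ``for $g_2$ of order $2$ no such $c$ would exist, which is why order $2$ is excluded'' is moot---infinite order already guarantees that $c\notin\{0,n_\ell\}$ can be chosen. The order-at-least-$3$ hypothesis in the statement is really a stand-in for the nonemptiness assumption you invoke (and which, as you note, holds in the paper's applications); with nonempty tables it becomes redundant. This does not affect the validity of your proof.
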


\subsection{Metric approach}\label{sec:metric_approach}

First we will play ping-pong using metric projections. We consider each euclidean apartment $\Sigma$ of type $\widetilde{A}_2$ as a euclidean vector space by randomly choosing an origin so that we have a notion of vectors. The inner product will be denoted $\langle \cdot,\cdot \rangle$. Every root $\alpha$ of $\Sigma$ naturally gives rise to a unit vector $\hat{\alpha}$ that is perpendicular to $\partial \alpha$ and (at each point of $\partial \alpha$) points into $\alpha$. The vector $\hat{\alpha}$ can be thought of as a root in the spherical root system of $\Sigma^\infty$.

We start by proving a lemma that will be useful to verify disjointness of the ping-pong sets. Consider the following configuration. Let $X$ be a building of type~$\widetilde{A}_2$ and let $\Sigma$ be an apartment. Let $\sector{S}$ be a sector with tip $x$. Let $C$ be the chamber at the tip of the opposite sector and let $C'$ be the chamber of the opposite sector adjacent to $C$; see Figure~\ref{fig:bouncing}. Denote by $\rho$ the retraction onto $\Sigma$ based at $C$ or at $C'$. Consider a geodesic path $\gamma \colon [0,\ell] \to X$. We want to put some restrictions on how the path $\bar{\gamma} \defeq \rho \circ \gamma$ can travel. It is known \cite[Lemmas~4.3,~4.4]{kapovich08} that $\bar{\gamma}$ is piecewise geodesic. For it to fail to be a local geodesic at a time $t$ the following conditions must be met:
\begin{enumerate}
\item $\bar{\gamma}(t) \in \partial \alpha$ for some root $\alpha$ that does not contain $C$,
\item the incoming tangent vector to $\bar{\gamma}(t)$ includes an obtuse angle with $\hat{\alpha}$.
\end{enumerate}
In this situation $\bar{\gamma}$ can bounce off of $\partial \alpha$ and continue traveling in $\alpha$. If that happens, the outgoing tangent vector of $\bar{\gamma}$ at $t$ is the incoming one reflected at $\partial \alpha$, in particular, it now includes an acute angle with $\hat{\alpha}$. If $\bar{\gamma}(t)$ lies in several walls (that is, in a face of codimension $>1$) then several of these bouncings can occur at the same time.

\begin{figure}[ht]
\centering
\includegraphics{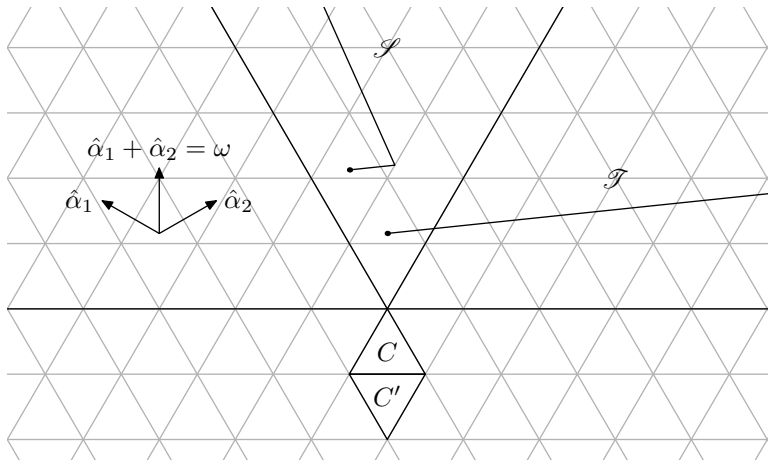}
\caption{Possibilities for $\bar{\gamma}$ in the proof of Lemma~\ref{lem:retractions_of_geodesics}.}
\label{fig:bouncing}
\end{figure}

\begin{lemma}\label{lem:retractions_of_geodesics}
 Let $\sector{S}$ and $\bar{\gamma}$ be as above and let $\omega$ be a vector pointing to the barycenter of $\sector{S}^\infty$. Assume that the starting point of $\bar{\gamma}$ lies in $\sector{S}$ and that the initial tangent vector includes a non-obtuse angle with $\omega$. Then every tangent vector of $\bar{\gamma}$ includes a non-obtuse angle with $\omega$.
\end{lemma}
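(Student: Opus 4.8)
The plan is to track the angle between a tangent vector of $\bar\gamma$ and the fixed direction $\omega$ along the path, and to show it can only decrease (stay non-obtuse) as we move forward. By \cite[Lemmas~4.3,~4.4]{kapovich08}, $\bar\gamma$ is piecewise geodesic, so it suffices to control two things: (i) the behaviour along each geodesic sub-arc, and (ii) the behaviour at each bouncing time. For (i), on a geodesic sub-arc the tangent vector is constant, so the angle with $\omega$ does not change at all; nothing needs to be checked. So the entire content is (ii): at a bouncing time $t$ where $\bar\gamma$ reflects off a wall $\partial\alpha$ (with $\alpha$ a root not containing the base chamber $C$), I must show that if the \emph{incoming} tangent vector $u$ makes a non-obtuse angle with $\omega$, then so does the \emph{outgoing} tangent vector $u' = u - 2\langle u,\hat\alpha\rangle\hat\alpha$, the reflection of $u$ at $\partial\alpha$.

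For step (ii), I would argue as follows. The bouncing condition tells us that the incoming vector $u$ includes an obtuse angle with $\hat\alpha$, i.e.\ $\langle u,\hat\alpha\rangle < 0$. So $u' = u + 2\abs{\langle u,\hat\alpha\rangle}\hat\alpha$, i.e.\ the outgoing vector is the incoming one \emph{plus} a positive multiple of $\hat\alpha$. Hence $\langle u',\omega\rangle = \langle u,\omega\rangle + 2\abs{\langle u,\hat\alpha\rangle}\langle\hat\alpha,\omega\rangle$. Since $\langle u,\omega\rangle\ge 0$ by the inductive hypothesis, it is enough to know that $\langle\hat\alpha,\omega\rangle\ge 0$ for every root $\alpha$ that can cause a bounce, that is, every root not containing $C$. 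This is the geometric heart of the matter: I must show that every wall of $\Sigma$ that separates the base chamber $C$ from part of $\Sigma$ has its positive side (the side \emph{not} containing $C$, which is the side $\bar\gamma$ can bounce into) making a non-obtuse angle with the direction $\omega$ pointing to the barycenter of $\sector S^\infty$. Here one uses that $\sector S$ is the sector opposite to the one whose tip chamber is $C$ (so $C$ and $\sector S$ are on "opposite" sides in $\Sigma$): the roots not containing $C$ are exactly the roots $\alpha$ with $\sector S^\infty \subseteq \alpha^\infty$, or at worst with $\sector S^\infty$ in the closed half-apartment $\bar\alpha^\infty$, so $\hat\alpha$ — viewed as a spherical root — is non-negative on the chamber at infinity of $\sector S$, giving $\langle\hat\alpha,\omega\rangle\ge 0$. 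This is a finite check in the $\widetilde A_2$ Coxeter complex: there are only three parallel classes of walls, and for each, the root not containing $C$ is the one on the side of $\sector S$, whose inward unit normal $\hat\alpha$ pairs non-negatively with the barycentric direction of $\sector S^\infty$. I would phrase this cleanly by noting $\langle \hat\alpha,\omega\rangle = \cos\angle(\hat\alpha,\omega)$ where both are unit vectors in the spherical apartment $\Sigma^\infty$, and $\hat\alpha\ge 0$ on $C^\infty$-opposite chambers.

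Having established $\langle\hat\alpha,\omega\rangle\ge 0$, the induction closes: the starting tangent vector has $\langle\cdot,\omega\rangle\ge 0$ by hypothesis, geodesic arcs preserve this quantity, and each bounce can only increase it (or, when several walls meet at a codimension-$>1$ face, each simultaneous reflection again only adds non-negative multiples of the respective $\hat\alpha$'s, so the composite still increases $\langle\cdot,\omega\rangle$). Therefore every tangent vector of $\bar\gamma$ includes a non-obtuse angle with $\omega$, as claimed.

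\textbf{Main obstacle.} The routine part is the reflection computation and the induction bookkeeping; the real work is the claim $\langle\hat\alpha,\omega\rangle\ge 0$ for all roots $\alpha$ not containing $C$. One must be careful about the exact hypotheses on $\sector S$ versus $C$ (that $C$ is the tip chamber of the opposite sector, and that we may base the retraction at $C$ or the adjacent chamber $C'$), and about the boundary case where a bounce occurs at a point lying on a wall whose root only \emph{weakly} contains $\sector S^\infty$ (i.e.\ $\hat\alpha$ vanishes on part of $\sector S^\infty$ but is still non-negative on it). Getting the sign conventions for "$\hat\alpha$ points into $\alpha$" consistent with "the side $\bar\gamma$ bounces into" is the one place where an error would break the argument.
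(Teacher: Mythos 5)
There is a genuine gap, and it lies exactly where you flagged the "geometric heart of the matter": the claim that $\langle\hat\alpha,\omega\rangle\ge 0$ for every root $\alpha$ of $\Sigma$ not containing $C$ is \emph{false}. Your characterization conflates Weyl directions with actual half-apartments. In the $\widetilde A_2$ Coxeter complex there are infinitely many walls in each of the three parallel classes, and for each direction $\hat\beta$ there are infinitely many roots with that normal. Write $\sector{S}=\alpha_1\cap\alpha_2$, so $\omega$ is proportional to $\hat\alpha_1+\hat\alpha_2$. Now take any root $\beta$ with $\hat\beta=-\hat\alpha_1$ whose wall is translated far enough in the $-\hat\alpha_1$ direction that $C$ lies strictly on the $\hat\alpha_1$ side; then $\beta$ does not contain $C$, yet $\langle\hat\beta,\omega\rangle=\langle-\hat\alpha_1,\hat\alpha_1+\hat\alpha_2\rangle=-1/2<0$. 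A bounce off such a wall (which requires the incoming tangent $u$ to have $\langle u,\hat\beta\rangle<0$, i.e.\ $\langle u,\hat\alpha_1\rangle>0$) is fully compatible with your running hypothesis $\langle u,\omega\rangle\ge 0$: for instance $u=\hat\alpha_1$ satisfies it, and after reflection $u'=-\hat\alpha_1$ gives $\langle u',\omega\rangle=-1/2<0$. So the invariant "$\langle\cdot,\omega\rangle\ge 0$" alone does not close under bouncing, and the claimed dichotomy "roots not containing $C$ are exactly those with $\sector{S}^\infty\subseteq\bar\alpha^\infty$" does not hold.

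The paper's proof avoids this by working in two stages with a \emph{strictly stronger} invariant in the second. While $\bar\gamma$ remains in $\sector{S}$, the only walls that can cause bouncing are those through $\sector{S}$, and for those your reasoning is fine (normals $\hat\alpha_1$, $\hat\alpha_2$, $\hat\alpha_1+\hat\alpha_2$, all at non-obtuse angle with $\omega$). The new content is what happens once $\bar\gamma$ crosses $\partial\alpha_1$ (say) into the adjacent sector $\sector{T}=(\alpha_1+\alpha_2)\cap(-\alpha_1)$, where the potentially problematic normal $-\hat\alpha_1$ first appears. At that point the paper upgrades the invariant to: the tangent vector makes a non-obtuse angle with \emph{both} $\hat\alpha_1+\hat\alpha_2$ \emph{and} $-\hat\alpha_1$. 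This stronger invariant does two things at once: it keeps $\bar\gamma$ inside $\sector{T}$, and (because a bounce off a wall with normal $-\hat\alpha_1$ requires an \emph{obtuse} incoming angle with $-\hat\alpha_1$) it rules out the bad bounce entirely, rather than trying to show the bounce is harmless. Your proposal would need an analogue of this "no bad bounce can happen" argument — merely controlling the angle with $\omega$ is not enough, as the explicit $u=\hat\alpha_1$ example shows.

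Two smaller remarks. First, the reflection bookkeeping in your step (ii) is correct as far as it goes, and it is essentially what the paper uses within $\sector{S}$. Second, the subtlety is not (as your "Main obstacle" paragraph suggests) a sign convention or a weak-containment boundary case; it is that your set of "roots to worry about" is genuinely larger than the set on which $\hat\alpha$ pairs non-negatively with $\omega$, and the lemma is saved only by showing $\bar\gamma$ never actually reaches those walls with an obtuse angle.
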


\begin{proof}
 We prove the case where $\rho$ is based at $C$, the case where $\rho$ is based at $C'$ being a consequence.

 Let $\alpha_1$ and $\alpha_2$ be such that $\sector{S} = \alpha_1 \cap \alpha_2$ and note that up to scaling $\omega$ equals $\hat{\alpha}_1 + \hat{\alpha}_2$. By assumption the initial tangent vector of $\bar{\gamma}(t)$ includes a non-obtuse angle with $\omega$. By the above discussion, as long as $\bar{\gamma}(t) \in \sector{S}$ the only directions in which $\bar{\gamma}$ can bounce off are toward $\hat{\alpha}_1$, $\hat{\alpha}_2$ and $\hat{\alpha}_1+\hat{\alpha}_2$. Since these three roots include an acute angle with $\omega$, this keeps the angle between the tangent vector and $\omega$ non-obtuse (actually any bouncing would make it acute).

 Now suppose that at time $t_0$ the path $\bar{\gamma}$ leaves $\sector{S}$ and enters an adjacent sector~$\sector{T}$ with the same tip. Without loss of generality, $\sector{T}$ is the intersection of $\alpha_1+\alpha_2$ and~$-\alpha_1$. Now as long as $\bar{\gamma}$ stays in $\sector{T}$ it can only bounce off in the direction of $\hat{\alpha}_2$, $-\hat{\alpha}_1$, or $\hat{\alpha}_1 + \hat{\alpha}_2$. We claim that for $t \ge t_0$ the following invariant is preserved

 \smallskip

 \begin{quote}
  Each tangent vector to $\bar{\gamma}(t)$ includes a non-obtuse angle with $\hat{\alpha}_1+\hat{\alpha}_2$ as well as with $-\hat{\alpha}_1$.
 \end{quote}

 \smallskip

 Clearly if this holds, then $\bar{\gamma}(t) \in \sector{T}$ for $t \ge t_0$. To see that the invariant is preserved, note that it holds at time $t_0$ by the assumption that $\bar{\gamma}$ enters $\sector{T}$ at that time. Also, since it holds there, $\bar{\gamma}$ bounces off neither in the direction of $-\hat{\alpha}_1$ nor in the direction of $\hat{\alpha}_1+\hat{\alpha}_2$. Finally, bouncing off in the direction $\hat{\alpha}_2$ does not affect the invariant (actually, it cannot happen either).
\end{proof}

For $g\in \{f,k\}$ let $p_g \colon X \to A_g$ be the metric projection onto $A_g$, and define the set
\[
X_g \defeq p_g\I((v_g^+,\xi_g^+)\cup (v_g^-,\xi_g^-))\text{.}
\]
We want to play ping-pong on $X_f$ and $X_k$. We start by showing disjointness.

\begin{figure}[ht]
\centering
\includegraphics{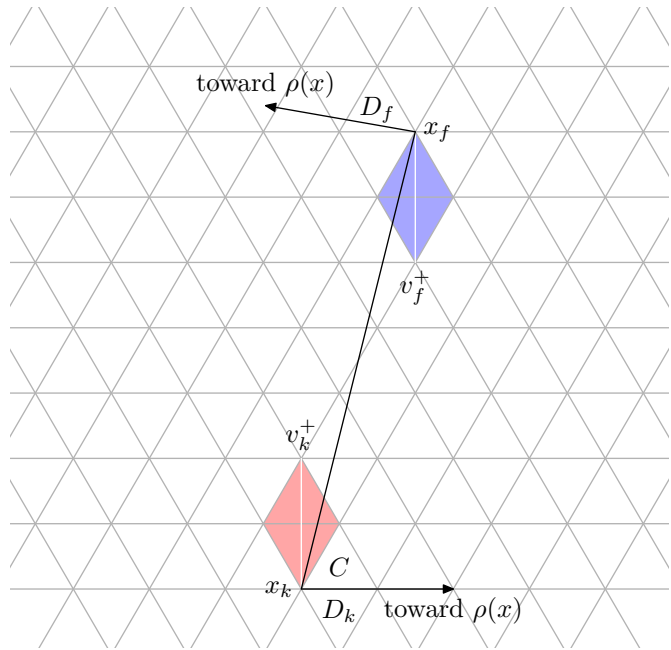}
\caption{The contradiction in the proof of Proposition~\ref{prop:metric_disjointness}: the picture shows part of the image of the retraction of the triangle $[x,x_f] \cup [x_f,x_k] \cup [x,x_k]$ onto $\Sigma$.}
\label{fig:metric_disjointness}
\end{figure} \pagebreak[3]

\begin{proposition}[Metric approach -- disjointness]\label{prop:metric_disjointness}
 The sets $X_f$ and $X_k$ are disjoint.
\end{proposition}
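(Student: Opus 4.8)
The plan is to argue by contradiction: suppose $x \in X_f \cap X_k$, so $p_f(x) = x_f \in (v_f^+,\xi_f^+) \cup (v_f^-,\xi_f^-)$ and $p_k(x) = x_k \in (v_k^+,\xi_k^+)\cup(v_k^-,\xi_k^-)$. By symmetry (replacing $f$ or $k$ by its inverse) we may assume $x_f$ lies on the ray $(v_f^+,\xi_f^+)$ and $x_k$ on $(v_k^+,\xi_k^+)$. By Section~\ref{sec:apts_for_pairs}, both of these rays lie in the apartment $\Sigma \defeq \Sigma_{f,k}$, whose picture is Figure~\ref{fig:sigma_fk}. The key geometric input is that the rays $[v_f^+,\xi_f^+)$ and $[v_k^+,\xi_k^+)$, together with the vertex $v$, bound (an asymptotic half-strip of) a sector $\sector{S}$ in $\Sigma$ with tip $v$: indeed $\xi_f^+$ and $\xi_k^+$ are barycenters of adjacent chambers at infinity (Figure~\ref{fig:link_of_v} shows the angle at $v$ between the germs of $A_f$ and $A_k$ is $2\pi/3$, so the two outgoing directions span a Weyl sector), and one reads off from Figure~\ref{fig:sigma_fk} that $x_f, x_k \in \sector{S}$.

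Next I would bring in the CAT(0) projection inequalities. Since $A_f$ is a complete geodesic (an axis) and $p_f$ is the nearest-point projection onto it, the segment $[x,x_f]$ meets $A_f$ orthogonally at $x_f$; in particular the angle at $x_f$ between $[x_f,x]$ and either direction along $A_f$ is at least $\pi/2$. The same holds at $x_k$ along $A_k$. Now retract onto $\Sigma$ based at the chamber $C$ (or $C'$) at the tip of the sector opposite $\sector{S}$, as in Lemma~\ref{lem:retractions_of_geodesics}, and let $\omega$ point to the barycenter of $\sector{S}^\infty$. I want to apply Lemma~\ref{lem:retractions_of_geodesics} to the retracted geodesic $\bar\gamma$ obtained from $\gamma = [x_f, x]$ followed by $[x, x_k]$ — or more cleanly, apply it separately to $[x_f,x]$ and to $[x_k,x]$, both started \emph{at} $x_f$ resp.\ $x_k$ inside $\sector{S}$. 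The orthogonality at $x_f$ means the initial direction of $[x_f,x]$ makes a non-obtuse angle with the ray $[v_f^+,\xi_f^+)$ direction inside $\Sigma$, and since that ray direction is one of the two "edges" of the sector cone at its generic point, it makes a non-obtuse angle with $\omega$; Lemma~\ref{lem:retractions_of_geodesics} then forces every tangent vector of $\overline{[x_f,x]}$ — in particular its endpoint direction at $\rho(x)$ — to make a non-obtuse angle with $\omega$. Symmetrically the endpoint direction of $\overline{[x_k,x]}$ at $\rho(x)$ makes a non-obtuse angle with $\omega$ for the \emph{other} edge choice, i.e. with a direction that is $\ge 2\pi/3$ from the first. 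Since retractions based at a chamber preserve the distance and the initial direction from that chamber but here we use the two paths ending at the common point $\rho(x)$, the two endpoint directions are the reversals of a single geodesic through $\rho(x)$ only if that geodesic is straight — and an angle count at $\rho(x)$ (two vectors each non-obtuse to $\omega$ but the sector cone at $\rho(x)$ has opening $2\pi/3 < \pi$, and the reversed-geodesic condition would need them antipodal, i.e. at angle $\pi$) yields a contradiction; this is exactly the clash depicted in Figure~\ref{fig:metric_disjointness}.

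The main obstacle I anticipate is making the last angle-count fully rigorous, because it conflates two things: (i) that the retraction of the concatenated geodesic $[x_f,x]\cup[x,x_k]$ can only \emph{decrease} distances, so $d(\rho(x_f),\rho(x_k)) \le \ell([x_f,x]) + \ell([x,x_k])$, while $\rho(x_f)=x_f$ and $\rho(x_k)=x_k$ are the fixed points on the rays and their distance in $\Sigma$ is something one must compare to the broken path's length; and (ii) that the $2\pi/3$ angular separation of the two sector edges at infinity controls how the two retracted sub-geodesics, each constrained by Lemma~\ref{lem:retractions_of_geodesics} to stay in (slightly enlarged) sectors, cannot meet at an interior point with the required orthogonality from both sides. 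I would handle this by noting that $\overline{[x_f,x]}$ stays in the closed sector $\bar{\sector{S}}$ (plus the one adjacent sector allowed by the lemma) with all tangent vectors non-obtuse to $\omega$, and similarly for $\overline{[x_k,x]}$ with a different $\omega'$; at the common endpoint $\rho(x)$ the outgoing directions of the two retracted paths must sum (after reversal) to something that violates the CAT(0) geodesic condition $\angle_{\rho(x)} \ge \pi$ unless $\rho(x)$ lies on the common wall, and then a direct inspection of Figure~\ref{fig:sigma_fk} — where the two white rays $[v,\xi_f^+)$ and $[v,\xi_k^+)$ only meet at $v$, which is not even in $\Sigma_{f,k}$ — closes the case. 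The other seven configurations (choices of $\pm$ on $f$ and $k$) follow by the symmetry already used in Lemma~\ref{lem:ends_opp} and Proposition~\ref{prop:push_into_apts}.
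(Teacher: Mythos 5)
Your proof has a fundamental gap stemming from a misreading of the geometry of $\Sigma_{f,k}$. You assert that $\xi_f^+$ and $\xi_k^+$ ``are barycenters of adjacent chambers at infinity'' whose rays span a Weyl sector, citing the $2\pi/3$ angle in Figure~\ref{fig:link_of_v}. This is wrong on both counts. By Lemma~\ref{lem:ends_opp} the ends $\xi_f^+$ and $\xi_k^+$ are \emph{opposite} chambers at infinity, not adjacent; that is precisely why they span an apartment $\Sigma_{f,k}$ at all. In that apartment the two rays $[v_f^+,\xi_f^+)$ and $[v_k^+,\xi_k^+)$ run off toward opposite ends --- they bound something like a strip, not a sector. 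The $2\pi/3$ angle you invoke is the angle in $\lk(v)$, a purely local quantity; it has nothing to do with the angle at infinity in $\Sigma_{f,k}$ (and indeed $v\notin\Sigma_{f,k}$ by Proposition~\ref{prop:push_into_apts}). The whole point of the paper is that this local angle is only $2\pi/3$ (blocking the Strong Schottky Lemma of \cite{alperin02}), while the asymptotic behavior is opposition. Once this picture is corrected, your sector $\sector{S}$ with tip $v$, the base chamber ``opposite $\sector{S}$,'' and the vector $\omega$ aimed at its barycenter are all undefined, and the argument built on them collapses.

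There is also a second, more technical divergence from what actually works. You retract onto $\Sigma_{f,k}$, but the paper retracts onto a \emph{different} apartment $\Sigma$: one chosen to contain the chambers $D_f\ni x_f$ and $D_k\ni x_k$ that contain the germs of the geodesics $[x_f,x]$ and $[x_k,x]$ pointing toward $x$. This choice is what makes the two key estimates go through: basing the retraction at $D_k$ ensures $\rho([x_k,x])$ is an honest geodesic (giving $\langle x_k-\rho(x),\omega\rangle\ge 0$ via the $\ge\pi/2$ projection angle), and having $D_f\subseteq\Sigma$ lets one apply Lemma~\ref{lem:retractions_of_geodesics} to $[x_f,x]$ (giving $\langle \rho(x)-x_f,\omega\rangle\ge 0$). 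The apartment $\Sigma_{f,k}$ need not contain $D_f$ or $D_k$, so neither estimate is available if you retract onto it. Finally, the paper uses a \emph{single} direction $\omega$ (roughly the direction along the strip from the $f$-side to the $k$-side) and sums three inner-product inequalities to reach $0>0$; your closing argument with two directions $\omega,\omega'$ at angle $\ge 2\pi/3$ and an appeal to ``$\angle_{\rho(x)}\ge\pi$'' does not produce a contradiction, because there is no reason the concatenation of the two retracted paths should be a geodesic, and you acknowledge as much. In short: the approach shares surface features with the paper (contradiction via $x_f,x_k$, projection angles, Lemma~\ref{lem:retractions_of_geodesics}), but the underlying geometric picture is wrong and the actual mechanism of the contradiction is missing.
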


\begin{proof}
Suppose that $x \in X$ is such that $p_f(x) \in (v_f^+,\xi_f^+)$ and that $p_k(x) \in (v_k^+,\xi_k^+)$. There are three other cases which are dealt with in exactly the same way. We write $x_f \defeq p_f(x)$ and $x_k \defeq p_k(x)$ and consider the geodesic triangle spanned by $x$, $x_f$ and $x_k$. Let $D_f$ be a chamber that contains $x_f$ as well as the germ of the geodesic toward $x$. Let $D_k$ be defined similarly. There is an apartment containing~$D_f$ and~$D_k$ which we call $\Sigma$. Being convex, $\Sigma$ also contains the geodesic $[x_f,x_k]$. Moreover, an inspection of $\Sigma_{f,k}$ tells us that the angle at $x_f$ between the direction toward $x_k$ and the direction toward $v_f^+$ is strictly less than $\pi/6$. This means that they point into the same chamber and in particular, the germ of $[x_f,v_f^+]$ in $x_f$ can also be seen in~$\Sigma$. The same reasoning applies to the germ of $[x_k,v_k^+]$ in $x_k$.

Now let $\omega$ be a vector at $x_f$ pointing away from $v_f^+$ (the same direction as from $x_k$ to $v_k^+$). Let $e$ be the edge of $D_k$ that is perpendicular to $\omega$ and let $C$ be the chamber of $\Sigma$ containing $e$ into which $\omega$ points. So $C$ is either $D_k$ or adjacent to it.  Let $\sector{S}$ be the sector in $\Sigma$ into which $\omega$ points and that meets $C$ precisely at its tip. In particular, we can apply Lemma~\ref{lem:retractions_of_geodesics} with $D_k$ being $C$ or $C'$. So let $\rho$ be the retraction onto $\Sigma$ based at $D_k$. We want to retract the triangle in question using~$\rho$, see Figure~\ref{fig:metric_disjointness}. First note that since $x_k$ is the projection of $x$, the angle $\angle_{x_k}(v_k^+,x)$ is non-acute \cite[Proposition~2.4~(4)]{bridson99}, and that $\rho([x_k,x])$ is a geodesic because $x_k \in D_k$, which shows that $\langle x_k-\rho(x),\omega \rangle \ge 0$. Second we can apply Lemma~\ref{lem:retractions_of_geodesics} to the geodesic $[x_f,x]$ 
to get that $\langle  \rho(x)-x_f,\omega \rangle \ge 0$. But as can be seen in $\Sigma_{f,k}$ we also have $\langle x_f-x_k,\omega \rangle >0$, and this implies that $\langle \rho(x) - \rho(x),\omega \rangle)>0$ which is absurd.
\end{proof}

It remains to show containment of the shifted ping-pong sets.

\begin{lemma}[Metric approach -- ping-pong]\label{lem:metric_pong}
 For any $|n|>2$ we have $f^n(X \setminus X_f)\subseteq X_f$ and $k^n(X \setminus X_k)\subseteq X_k$.
\end{lemma}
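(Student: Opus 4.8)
The plan is to show that for $g \in \{f,k\}$ and any $|n| > 2$, the isometry $g^n$ maps the complement $X \setminus X_g$ into $X_g$. Fix $g = f$; the case $g = k$ is identical after conjugating by $s$. Recall that $X_f = p_f\I\bigl((v_f^+,\xi_f^+) \cup (v_f^-,\xi_f^-)\bigr)$ where $p_f$ is the metric projection onto the axis $A_f$, which is a geodesic line in the apartment $\Sigma_f$ through $v$ with endpoints $\xi_f^\pm$. Since $f$ acts as a translation of length $|f|$ along $A_f$ and commutes with everything in sight, $f^n$ preserves $A_f$ and $f^n \circ p_f = p_f \circ f^n$ (the projection onto an $f$-invariant convex set is $f$-equivariant). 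So it suffices to understand where $f^n$ sends the segment of $A_f$ near $v$. Concretely, the vertices along $A_f$ are the lattice classes $f^m.v$ for $m \in \Z$, and $v$ itself sits strictly between $v_f^- = f\I.v$ and $v_f^+ = f.v$. Writing $[v,\xi_f^+)$ as the ray in $A_f$ from $v$ toward $\xi_f^+$, the set $X \setminus X_f$ consists of points projecting to $[v_f^-, v_f^+]$ (a compact segment of $A_f$ of length $2|f|$, namely the closed segment whose endpoints are $f^{\pm 1}.v$ together with the two open rays removed).

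The key geometric step is to pin down the projection of a general point $x \in X$ onto $A_f$ and show that applying $f^n$ with $|n| \geq 3$ pushes $p_f(x)$ past $v_f^+$ (respectively past $v_f^-$) and onto the open ray $(v_f^+,\xi_f^+)$ or $(v_f^-,\xi_f^-)$. The heart of the matter is a displacement estimate: I claim $d_f(x) = d(x, f.x) \geq 2|f|$ for every $x \notin X_f$, or more precisely that $p_f(x)$ lies in the segment $[v_f^-,v_f^+]$, hence its image under $f^n$ lies at combinatorial distance $\geq |n| - 1 \geq 2$ from $v$ along $A_f$, which is exactly the open ray. The content to check is that for $x \notin X_f$ the projection really is trapped in $[v_f^-,v_f^+]$ and does not run off to $\xi_f^\pm$ along $A_f$ — but that is tautological, since $p_f(x) \notin (v_f^+,\xi_f^+) \cup (v_f^-,\xi_f^-)$ means precisely $p_f(x) \in [v_f^-,v_f^+]$ by definition of $A_f$ as a line. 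So the real work is purely the arithmetic of translating: if $p_f(x) = f^m.w$ for $w$ on the segment $[v,v_f^+)$ with $0 \le m \le 0$ — i.e. $p_f(x)$ is within distance $|f|$ of $v$ — then $f^n.p_f(x)$ is within distance $|f|$ of $f^n.v$, hence for $|n| \ge 3$ it lies strictly between $f^{n-1}.v$ and $f^{n+1}.v$, and since $|n-1|, |n+1| \geq 2$ all of these are interior points of the appropriate ray $(v_f^+,\xi_f^+)$ or $(v_f^-,\xi_f^-)$. Combined with $f^n$-equivariance of $p_f$ this gives $p_f(f^n.x) = f^n.p_f(x) \in (v_f^\pm,\xi_f^\pm)$, i.e. $f^n.x \in X_f$.

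Let me restate this more carefully as the actual proof skeleton. First, since $A_f$ is a geodesic line and $p_f$ is the nearest-point projection onto the convex set $A_f$, and since $f$ stabilizes $A_f$ acting as a translation by $|f|$, we have the equivariance $p_f \circ f = f \circ p_f$, hence $p_f \circ f^n = f^n \circ p_f$ for all $n$. Second, parametrize $A_f$ by arclength as $c \colon \R \to A_f$ with $c(m|f|) = f^m.v$ for $m \in \Z$ and $c(+\infty) = \xi_f^+$. Then $(v_f^-,\xi_f^-) \cup (v_f^+,\xi_f^+) = c\bigl((-\infty,-|f|) \cup (|f|,+\infty)\bigr)$, so $x \notin X_f$ if and only if $p_f(x) = c(t)$ with $|t| \leq |f|$. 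Third, for such $x$ and for $|n| \geq 3$, $p_f(f^n.x) = f^n.p_f(x) = f^n.c(t) = c(t + n|f|)$; and $|t + n|f|| \geq |n||f| - |t| \geq (|n|-1)|f| \geq 2|f| > |f|$, so $p_f(f^n.x) \in (v_f^-,\xi_f^-) \cup (v_f^+,\xi_f^+)$, i.e. $f^n.x \in X_f$. This proves $f^n(X \setminus X_f) \subseteq X_f$. The statement for $k$ follows verbatim with $k$, $A_k$, $\xi_k^\pm$, $v_k^\pm$ in place of $f$, $A_f$, $\xi_f^\pm$, $v_f^\pm$.

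The main obstacle I anticipate is not in this containment argument — which is essentially formal once one has $f$-equivariance of the projection and the translation structure of $A_f$ from Proposition~\ref{prop:axes} — but rather in making sure the bookkeeping with $v_f^\pm$ is consistent: specifically that $v_f^\pm = f^{\pm 1}.v$ and that $v$ lies strictly in the interior of the segment $[v_f^-, v_f^+] \subseteq A_f$, so that the complement $X \setminus X_f$ really is $p_f\I([v_f^-,v_f^+])$ with the two removed rays being exactly what remains. This is where one must invoke Proposition~\ref{prop:push_into_apts} and the explicit description at the end of Section~\ref{sec:apts_for_pairs} to know that $A_f$ meets the four apartments $\Sigma_{f^{\pm1},k^{\pm1}}$ precisely along the rays $[v_f^\pm,\xi_f^\pm)$, confirming that the endpoints of the removed rays are the vertices $f^{\pm1}.v$ and nothing closer to $v$. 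Given that, the bound $|n| > 2$ is exactly sharp: with $|n| = 2$ one would only get distance $(|n|-1)|f| = |f|$, landing possibly at the endpoint $v_f^\pm$ rather than in the open ray, so the cubes are genuinely the smallest powers the argument yields.
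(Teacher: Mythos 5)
Your proof is correct and takes essentially the same approach as the paper's one-line argument: use the $\langle f\rangle$-equivariance of the metric projection $p_f$ onto the axis $A_f$, observe that $X\setminus X_f = p_f\I([v_f^-,v_f^+])$, and check that $f^n$ translates the segment $[v_f^-,v_f^+]$ strictly past $v_f^{\pm}$ once $|n|\ge 3$. One stray remark is false --- the claimed displacement bound $d_f(x)\ge 2|f|$ for $x\notin X_f$ fails already at $x=v$, where $d_f(v)=|f|$ --- but you immediately abandon it in favor of the correct statement about $p_f(x)\in[v_f^-,v_f^+]$, and the clean restatement at the end is exactly right.
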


\begin{proof}
 For $g\in\{f,k\}$ note that the action of $\langle g\rangle$ commutes with $p_g$. Then since $g^{\pm n}([v_g^-,v_g^+])\subseteq (v_g^\pm,\xi_g^\pm)$ for $n>2$, we conclude that $g^n(X\setminus X_g) \subseteq X_g$ for $|n|>2$, and the result follows.
\end{proof}

\begin{proof}[Proof 1 of Theorem]
Let $m,n \ge 3$. We set $g_1 = f^m$, $g_2 = k^n$, $X_1 = X_f$ and $X_2 = X_k$ and want to apply the Ping-Pong Lemma. It is easily seen that $g_1$ and $g_2$ have infinite order. Proposition~\ref{prop:metric_disjointness} shows that $X_1$ and $X_2$ are disjoint and Lemma~\ref{lem:metric_pong} shows that non-trivial powers of $g_i$ move $X_j$ into $X_i$ for $i \ne j$.
\end{proof}

\begin{remark}\label{rmk:metric_limitations}
It seems tempting to replace the open rays $(v_g^\pm,\xi_g^\pm)$ by closed rays $[v_g^\pm,\xi_g^\pm)$ to try to obtain freeness of the group generated by $f^2$ and $k^2$. However, using the analogous definitions, disjointness fails to hold: there exist points $x$ for which $x_f=v_f^+$ and $x_k=v_k^+$. An explicit example of such an $x$ is $x=[[\Ocal b_1 + \pi^{-2}\Ocal b_2 + \pi^{-1}\Ocal b_3]]$ in $\Sigma_{f,k}$, where $b_1,b_2,b_3$ are the three vectors spanning the lines in the frame $\Frame_{f,k}$ in Table~\ref{table:frames} (it is the vertex three edges to the right of $k.v$ in Figure~\ref{fig:sigma_fk}).
\end{remark}

\subsection{Simplicial approach}\label{sec:simp_approach}

We now give a second proof of the theorem. This time the ping-pong sets will be defined using the simplicial structure of the building rather than its metric structure.

\begin{figure}[ht]
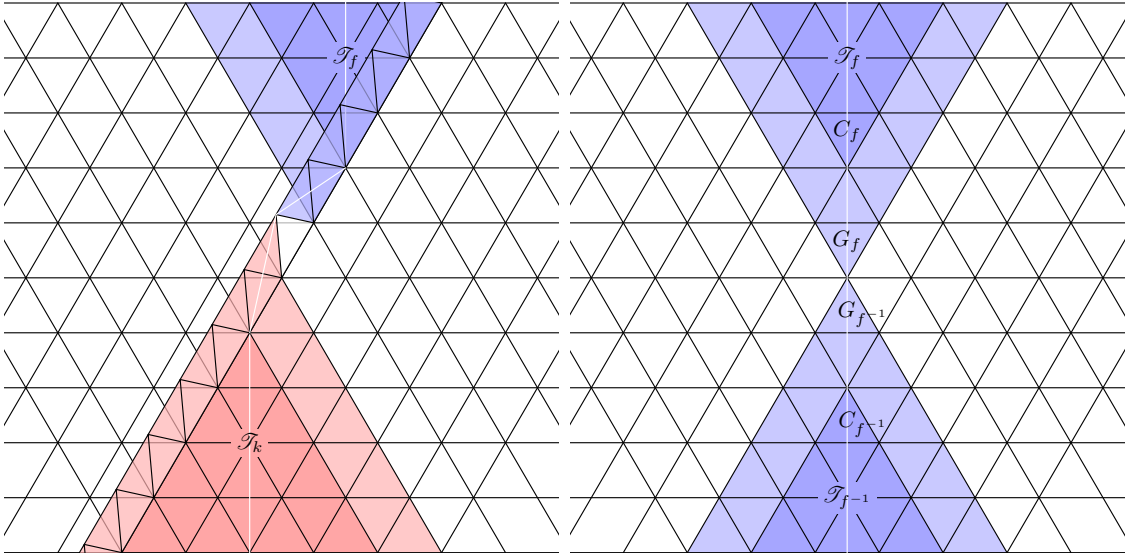

\centering
\includegraphics[width=.49\textwidth]{building_pong-0}
\includegraphics[width=.49\textwidth]{building_pong-2}
\caption{The left picture shows the sectors $\sector{T}_f$ and $\sector{T}_k$ in the extended apartment $\Sigma_{f,k}$ from Figure~\ref{fig:sigma_fk}. The larger lighter sectors are~$\sector{S}_f$ and~$\sector{S}_k$, respectively. The right picture shows $\Sigma_{f}$. The sectors~$\sector{T}_f$ and~$\sector{T}_{f^{-1}}$ are painted blue and the sectors $\sector{S}_f$ and $\sector{S}_{f^{-1}}$ are painted lighter blue. The chambers $G_{f^\pm}$ and $C_{f^\pm}$ are labeled.}
\label{fig:sectors}
\end{figure}

Let $v$ be the standard vertex in $X$. For each $g \in \{f,k,f\I,k\I\}$ we make the following definitions. We let $\sector{S}_g$ be the sector with tip $v$ that contains the attracting fixed point at infinity of $g$ in its boundary, that is, contains the points $g^n.v, n\ge 0$. Let $G_g$ be the chamber at the tip of $\sector{S}_g$. Finally, let $C_g = g.G_g$ and $\sector{T}_g = g.\sector{S}_g$; see Figure~\ref{fig:sectors}. We will make repeated use of the retraction $\rho_{\Sigma_g,C_g}$ (where $\Sigma_{g\I} = \Sigma_g$) so we abbreviate it to $\rho_g$.

We define the new ping-pong sets to be
\[
X_f \defeq \rho\I_f(\sector{T}_f) \cup \rho\I_{f\I}(\sector{T}_{f\I}) \text{ and } X_k \defeq \rho\I_{k}(\sector{T}_k) \cup \rho\I_{k\I}(\sector{T}_{k\I}) \text{.}
\]
Note that $\rho\I_g(\sector{T}_g) = g.\rho\I_{\Sigma_g,G_g}(\sector{S}_g)$ by the following observation.

\begin{observation}\label{obs:shifted_retractions}
If $g$ is a building isometry, $\Sigma$ is an apartment and $C$ is a chamber, then $g \circ \rho_{\Sigma,C} = \rho_{g.\Sigma,g.C} \circ g$.
\end{observation}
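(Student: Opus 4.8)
The plan is to exploit the uniqueness built into the definition of a retraction: $\rho_{\Sigma,C}$ is \emph{the} continuous map $X \to \Sigma$ that fixes $C$ pointwise and restricts to a simplicial isomorphism onto $\Sigma$ on every apartment containing $C$ (as recalled in Section~\ref{sec:buildings}). So rather than chasing points through both composites directly, I would show that the conjugated map $g \circ \rho_{\Sigma,C} \circ g\I$ satisfies exactly the defining properties of $\rho_{g.\Sigma, g.C}$, and then conclude by uniqueness; composing with $g$ on the right then yields the stated identity $g \circ \rho_{\Sigma,C} = \rho_{g.\Sigma, g.C}\circ g$.

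In detail, first note that $g \circ \rho_{\Sigma,C} \circ g\I$ is a well-defined continuous map $X \to g.\Sigma$, using that a building isometry $g$ sends apartments to apartments and chambers to chambers. It fixes $g.C$ pointwise: for $x \in g.C$ we have $g\I.x \in C$, hence $\rho_{\Sigma,C}(g\I.x) = g\I.x$, and applying $g$ returns $x$. Second, let $\Sigma'$ be any apartment containing $g.C$. Then $g\I.\Sigma'$ is an apartment containing $C$, so $\rho_{\Sigma,C}$ restricts on it to a simplicial isomorphism $g\I.\Sigma' \to \Sigma$; precomposing with the simplicial isomorphism $\Sigma' \to g\I.\Sigma'$ induced by $g\I$ and postcomposing with the simplicial isomorphism $\Sigma \to g.\Sigma$ induced by $g$ shows that $g \circ \rho_{\Sigma,C} \circ g\I$ restricts on $\Sigma'$ to a simplicial isomorphism onto $g.\Sigma$. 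By uniqueness of the retraction, $g \circ \rho_{\Sigma,C} \circ g\I = \rho_{g.\Sigma, g.C}$.

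I do not expect any real obstacle here; the statement is essentially formal. The only point that needs to be made (trivially) is the compatibility of $g$ with the simplicial and metric structure, i.e.\ that $g$ carries apartments to apartments, chambers to chambers, and simplicial isomorphisms to simplicial isomorphisms — all of which is part of being a building isometry. An alternative would be to verify the equality apartment-by-apartment on a fixed apartment containing $C$ and then extend, but the uniqueness argument above is shorter and avoids any case analysis.
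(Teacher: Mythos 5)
Your argument is correct. The paper in fact gives no proof of this Observation (it is stated as an immediate consequence of the definition), and your approach — conjugating the retraction by $g$ and invoking the characterization of $\rho_{\Sigma,C}$ as the unique continuous map fixing $C$ pointwise and restricting to a simplicial isomorphism on every apartment containing $C$, exactly as recalled in Section~\ref{sec:buildings} — is the natural formal argument and matches what the authors tacitly have in mind.
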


\pagebreak[3]

\begin{lemma}[Simplicial approach -- disjointness]\label{lem:retractions_disjointness}
 $X_f \cap X_k = \emptyset$.
\end{lemma}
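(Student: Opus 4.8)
The goal is to show $X_f\cap X_k=\emptyset$, where $X_f=\rho_f^{-1}(\sector{T}_f)\cup\rho_{f^{-1}}^{-1}(\sector{T}_{f^{-1}})$ and similarly for $X_k$. My plan is to argue by contradiction: suppose $x$ lies in, say, $\rho_f^{-1}(\sector{T}_f)\cap\rho_k^{-1}(\sector{T}_k)$; the other three combinations (involving $f^{-1}$ or $k^{-1}$) should be handled by the same reasoning after relabeling. The key structural input is that by Proposition~\ref{prop:push_into_apts} and the discussion after it, the sectors $\sector{T}_f$ and $\sector{T}_k$ both live inside $\Sigma_{f,k}$, with $\sector{T}_f$ pointing toward $\xi_f^+$ and $\sector{T}_k$ pointing toward $\xi_k^+$, and these two sectors are "far apart" in $\Sigma_{f,k}$ — separated by a wall (or a union of chambers) in a way visible in Figure~\ref{fig:sigma_fk}. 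So the contradiction I want is that $x$ cannot simultaneously retract into two disjoint, oppositely-oriented sectors of the same apartment.

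The mechanism to extract the contradiction is exactly Lemma~\ref{lem:retractions_of_geodesics}, applied twice. First I would fix a base chamber and apartment: take a geodesic from a point near $\sector{T}_f$ to a point near $\sector{T}_k$ inside $\Sigma_{f,k}$, or better, work with the retractions $\rho_f$ and $\rho_k$ directly. Since $\rho_f$ is based at $C_f=f.G_f$ and $\Sigma_f$ contains $\sector{T}_f$, and since $\rho_f$ is a simplicial isomorphism on any apartment containing $C_f$, I can compare $\rho_f(x)$ with $\rho_k(x)$ by tracking how a geodesic from $x$ to a far-out point of $\sector{T}_f$ retracts. The crucial point is that Lemma~\ref{lem:retractions_of_geodesics} guarantees that retracted geodesics, once they start out heading into a sector along its barycentric direction $\omega$, never develop an obtuse angle with $\omega$; hence they stay on the correct side of the separating wall. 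Concretely: if $x\in\rho_f^{-1}(\sector{T}_f)$, then the retraction under $\rho_f$ of a suitable geodesic ray from $x$ lands in $\sector{T}_f$ and stays there, which forces $x$ itself (when viewed through the other retraction $\rho_k$) to land on the $\sector{T}_f$ side — contradicting $\rho_k(x)\in\sector{T}_k$, since $\sector{T}_f$ and $\sector{T}_k$ lie on opposite sides of a wall in $\Sigma_{f,k}$.

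The main obstacle, and the part requiring genuine care rather than formula-pushing, is setting up the common apartment and base chamber so that Lemma~\ref{lem:retractions_of_geodesics} genuinely applies to \emph{both} retractions at once — i.e., verifying that the chamber $C_f$ (resp.\ $C_k$) plays the role of "$C$ or $C'$" relative to the relevant opposite sector, and that the initial tangent vectors of the geodesics being retracted indeed make a non-obtuse angle with the barycentric direction $\omega$ of $\sector{T}_f$ (resp.\ $\sector{T}_k$). This is where one must stare at the explicit picture of $\Sigma_{f,k}$ (Figure~\ref{fig:sigma_fk}): one needs that $v_f^+=f.v$ and $v_k^+=k.v$ sit in $\Sigma_{f,k}$ with the rays $[v_f^+,\xi_f^+)$ and $[v_k^+,\xi_k^+)$ diverging, and that the chambers $C_f,C_k$ are positioned so that a geodesic from $x$ retracting into $\sector{T}_f$ must cross a wall that $\sector{T}_k$ does not meet. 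Once that geometric configuration is pinned down, the two applications of Lemma~\ref{lem:retractions_of_geodesics} close the argument, and the remaining three cases follow by the symmetry of the construction under swapping $f\leftrightarrow f^{-1}$ and $k\leftrightarrow k^{-1}$.
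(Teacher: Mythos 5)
Your proposal does not match the paper's argument and, as sketched, has a genuine gap. The paper's proof is purely combinatorial and very short: it observes that $\rho\I_{\Sigma_g,G_g}(\sector{S}_g)$ is exactly the set of chambers whose building-theoretic projection onto $v$ is $G_g$. Since $G_f$, $G_k$, $G_{f\I}$, $G_{k\I}$ are four distinct chambers containing $v$ (see Figure~\ref{fig:link_of_v}), these four preimages have pairwise disjoint interiors; and because $g$ translates $\sector{S}_g$ strictly into itself, $\rho\I_g(\sector{T}_g)=g.\rho\I_{\Sigma_g,G_g}(\sector{S}_g)$ lands in the interior of $\rho\I_{\Sigma_g,G_g}(\sector{S}_g)$, which gives $X_f\cap X_k=\emptyset$ immediately. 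No metric input is used at all.

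The gap in your sketch is in the step where you claim that $x\in\rho_f^{-1}(\sector{T}_f)$ ``forces $x$ itself (when viewed through the other retraction $\rho_k$) to land on the $\sector{T}_f$ side.'' Lemma~\ref{lem:retractions_of_geodesics} controls how a \emph{single} retraction distorts a \emph{single} geodesic; it gives no mechanism for comparing the images of $x$ under the two different retractions $\rho_f$ (onto $\Sigma_f$) and $\rho_k$ (onto $\Sigma_k$), which land in different apartments that meet only at $v$. The metric disjointness proof (Proposition~\ref{prop:metric_disjointness}) manages such a comparison only because it works with metric projections $p_f$, $p_k$ to the axes, sets up the geodesic triangle on $x$, $p_f(x)$, $p_k(x)$, and uses the non-acute-angle property at $p_k(x)$ coming from metric projection together with one application of Lemma~\ref{lem:retractions_of_geodesics} to the side $[p_f(x),x]$. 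Simplicial retractions do not satisfy the analogous angle property, and you have not replaced it with anything; so the ``two applications of Lemma~\ref{lem:retractions_of_geodesics}'' you invoke do not actually close the argument. Also note that $\rho_f(x)$ and $\rho_k(x)$ live in $\Sigma_f$ and $\Sigma_k$ respectively, not in $\Sigma_{f,k}$, so the picture of two sectors separated by a wall in $\Sigma_{f,k}$ is not directly the ambient stage for the contradiction you want; you would need an additional retraction or a careful choice of common apartment to even make the two images comparable. The intended simplicial argument sidesteps all of this by reducing disjointness to the uniqueness of building-theoretic projections onto $v$.
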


\begin{proof}
 First note that for every $g \in \{f,k,f\I,k\I\}$ the set
 \[
 \rho\I_{\Sigma_g,G_g}(\sector{S}_g)
 \]
 consists of those chambers whose building theoretic projection onto $v$ is $G_g$. Hence their interiors are pairwise disjoint. But translating by $g$ moves each set into its interior.
\end{proof}

\begin{figure}[ht]
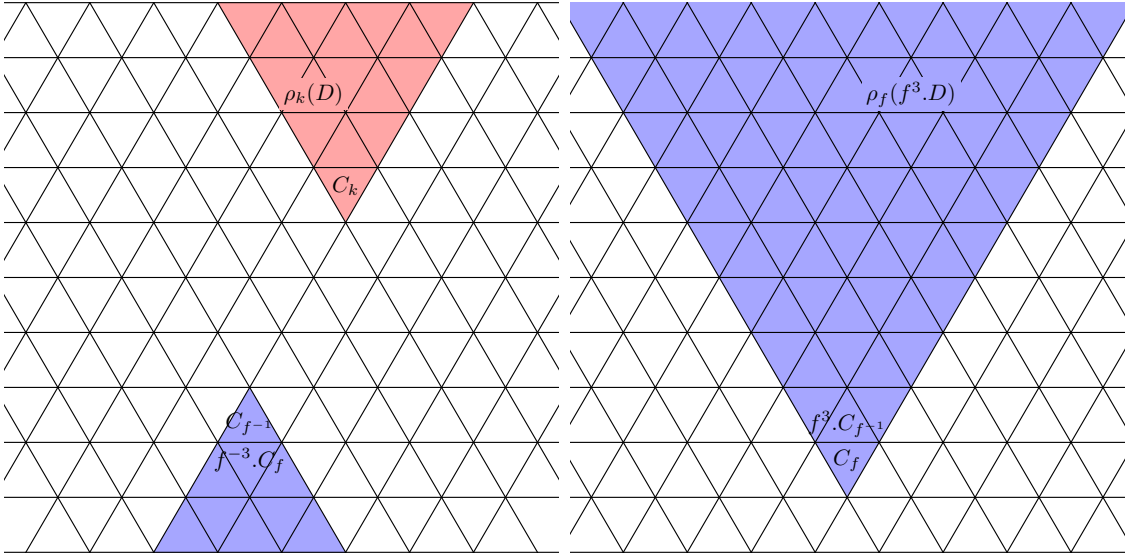

\centering
\includegraphics[width=.49\textwidth]{building_pong-3}
\includegraphics[width=.49\textwidth]{building_pong-4}
\caption{The proof of Proposition~\ref{prop:retractions_pong} for $n=3$. The left picture shows $\Sigma_{f^{-1},k}$ and the right picture shows $\Sigma_f$. There is a minimal gallery from $f^{-3}.C_f$ to $\rho_k(D)$ that passes through $C_k$, showing that $\delta(f^{-3}.C_f,\rho_k(D)) = \delta(f^{-3}.C_f,D)$. Moreover, $\delta(f^{-3}.C_f,D) = \delta(C_f,f^3.D)$ which determines $\rho_f(f^3.D)$.}
\label{fig:simplicial_ping_pong}
\end{figure}

\begin{proposition}[Simplicial approach -- ping-pong]\label{prop:retractions_pong}
 For any $|n|>2$, $f^n(X_k) \subseteq X_f$ and $k^n(X_f) \subseteq X_k$.
\end{proposition}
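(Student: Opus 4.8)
The plan is to show directly that if $D$ is a chamber in $X_k$, then $f^n.D$ lies in $X_f$ for all $|n| > 2$; the statement for $k$ follows by the symmetric argument (conjugating by $s$). By the decomposition $X_k = \rho\I_{k}(\sector{T}_k) \cup \rho\I_{k\I}(\sector{T}_{k\I})$, and using Observation~\ref{obs:shifted_retractions} to rewrite $\rho\I_g(\sector{T}_g) = g.\rho\I_{\Sigma_g,G_g}(\sector{S}_g)$, membership of $D$ in $X_k$ is equivalent to saying that the building-theoretic projection of $k\I.D$ (or $k.D$) onto $v$ is $G_k$ (respectively $G_{k\I}$). Equivalently, and more usefully for us, every minimal gallery from $C_k$ (respectively $C_{k\I}$) to $D$ passes through the chamber $k.v$-side appropriately; concretely $\delta(C_k, D)$ and $\delta(C_{k\I},D)$ are controlled. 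I want to conclude the analogous statement with $f$ in place of $k$: that the projection of $f^{-n}.D$ (or $f^n.D$) onto $v$ is $G_f$ (respectively $G_{f\I}$), which is exactly $f^n.D \in \rho\I_f(\sector{T}_f)$ (respectively $\rho\I_{f\I}(\sector{T}_{f\I})$).

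The key geometric input is that the apartments $\Sigma_{f^{\pm1},k^{\pm1}}$ of Section~\ref{sec:apts_for_pairs} simultaneously contain the relevant chambers near $v$ for both $f$ and $k$. Fix, say, $D \in \rho\I_{k}(\sector{T}_k)$, so $\delta(C_k,D) = \delta(C_k, \rho_k(D))$ with $\rho_k(D) \in \sector{T}_k \subseteq \Sigma_k$. The plan is: (1) pick the apartment $\Sigma_{f\I,k}$, which contains both $C_k$ and the chamber $f^{-n}.C_f$ for $n \ge 1$ (this is where Proposition~\ref{prop:push_into_apts} and the explicit picture of Figure~\ref{fig:sigma_fk} enter — $f^{-n}.v$ and $k.v$ both lie in $\Sigma_{f\I,k}$, and one reads off the chambers); (2) exhibit, inside $\Sigma_{f\I,k}$ together with the retracted data, a minimal gallery from $f^{-n}.C_f$ to $\rho_k(D)$ passing through $C_k$ — this uses that $\xi_f^-$ and $\xi_k^+$ are opposite at infinity (Lemma~\ref{lem:ends_opp}), so that $f^{-n}.C_f$ and the sector $\sector{T}_k$ sit on genuinely opposite sides, plus the explicit local geometry that for $n \ge 3$ the chamber $f^{-n}.C_f$ is "far enough past $v$" to see $C_k$ on every minimal gallery; (3) deduce $\delta(f^{-n}.C_f, \rho_k(D)) = \delta(f^{-n}.C_f, D)$, hence $\delta(C_f, f^n.D) = \delta(f^{-n}.C_f, D)$, which pins down $\rho_f(f^n.D)$ and places it in $\sector{T}_f$; this is precisely the content illustrated in Figure~\ref{fig:simplicial_ping_pong}. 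The case $D \in \rho\I_{k\I}(\sector{T}_{k\I})$ and the cases with $f$ replaced by $f\I$ or $n < 0$ are handled by the same argument using the appropriate one of the four apartments in Table~\ref{table:frames}.

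I expect the main obstacle to be step (2): verifying that for $|n| \ge 3$ the chamber $f^{-n}.C_f$ really does lie on the far side of $C_k$ from $\sector{T}_k$, so that every minimal gallery $f^{-n}.C_f \to \rho_k(D)$ factors through $C_k$. This is where the numerology "cubes, not squares" is forced: one must check that the germ of the axis $A_f$ at $v$ makes an angle $2\pi/3$ (not $\pi$) with $A_k$ (from the link picture, Figure~\ref{fig:link_of_v}), so that $C_f$ and $C_k$ are not yet opposite at $v$, and only after moving two more chambers along does opposition kick in — this requires combining Lemma~\ref{lem:ends_opp} with a careful inspection of the chambers $G_{f^\pm}, C_{f^\pm}$ inside the explicit apartments $\Sigma_{f^{\pm1},k^{\pm1}}$. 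Once that gallery-factorization is established, steps (1) and (3) are bookkeeping with Weyl distances and the defining property of retractions, and the final invocation of the Ping-Pong Lemma (Lemma~\ref{lem:pplemma}) with $g_1 = f^m$, $g_2 = k^n$, $X_1 = X_f$, $X_2 = X_k$, $m,n \ge 3$, together with Lemma~\ref{lem:retractions_disjointness}, completes the second proof of the theorem.
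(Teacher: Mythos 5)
Your plan follows the paper's simplicial proof in all essentials: the same reduction to the case $f^n(\rho\I_k(\sector{T}_k)) \subseteq \rho\I_f(\sector{T}_f)$, the same use of the mixed apartment $\Sigma_{f\I,k}$ to compute Weyl distances, and the same trick of exhibiting a minimal gallery from a chamber far out on the $f$-axis to $\rho_k(D)$ through $C_k$ so as to get $\delta(f^{-n}.C_f,\rho_k(D)) = \delta(f^{-n}.C_f, D)$. Two points need fixing, however.

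First, the claim that the $k$-case ``follows by the symmetric argument (conjugating by $s$)'' does not work as stated. Since $s$ carries $f$-data to $k$-data ($s.\Sigma_f = \Sigma_k$, $s.C_f = C_k$, hence $s.X_f = X_k$ by Observation~\ref{obs:shifted_retractions}), applying $s$ to the established inclusion $f^n(X_k) \subseteq X_f$ gives $k^n(s.X_k) \subseteq X_k$; but $s.X_k = s^2.X_f$, which is not $X_f$. The correct route is simply to repeat the argument with $f$ and $k$ interchanged, reading the data off the other apartments from Table~\ref{table:frames}, which is what the paper means by ``the other cases being similar.'' Second, your step (3) is where the real work lies and is underspecified. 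Knowing $\delta(C_f, f^n.D) = \delta(f^{-n}.C_f,\rho_k(D))$ does pin down $\rho_f(f^n.D)$ as a chamber of $\Sigma_f$, but you have not argued why that chamber lies in $\sector{T}_f$. The paper handles this by also recording that the minimal gallery from $f^{-3}.C_f$ to $\rho_k(D)$ passes through $C_{f\I}$; shifting by $f^n$ then forces the building-theoretic projection of $f^n.D$ onto $f^{n-2}.v$ (and hence, since $n>2$, onto $f.v$) to be $C_f$, which by the characterization established in Lemma~\ref{lem:retractions_disjointness} gives $f^n.D \in \rho\I_f(\sector{T}_f)$. Your plan needs this $C_{f\I}$-passage and projection step, or an equivalent direct inspection in $\Sigma_f$, to close the gap.
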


\begin{proof}
 We show that $f^n(\rho\I_k(\sector{T}_k)) \subseteq \rho\I_f(\sector{T}_f)$ for $n>2$, the other cases being similar. Let $D$ be a chamber such that $\rho_k(D) \in \sector{T}_k$. Since $\rho_k(D)$ lies in $\Sigma_{f\I,k}$, we can determine $\delta(f^{-3}.C_f,\rho_k(D))$ inside it. In particular, we see that a minimal gallery from $f^{-3}.C_f$ to $\rho_k(D)$ has to pass through $C_{f\I}$ and can be chosen to pass through $C_k$. The latter fact means that $\delta(f^{-3}.C_f,\rho_k(D)) = \delta(f^{-3}.C_f,D)$.

 Shifting by $f^n$, we see that every minimal gallery from $f^{n-3}.C_f$ to $f^n.D$ passes through $f^n.C_{f\I}$. As a consequence, the projection of $f^n.D$ to $f^{n-2}.v$ has to be $f^{n-2}.G_f=f^{n-3}.C_f$, and in particular (since $n>2$) the projection to $f.v$ has to be~$C_f$. Thus $f^n.D \in \rho\I_f(\sector{T}_f)$. See Figure~\ref{fig:simplicial_ping_pong} for a visual proof of the $n=3$ case.
\end{proof}

\begin{proof}[Proof 2 of Theorem]
As before, Lemma~\ref{lem:retractions_disjointness} and Proposition~\ref{prop:retractions_pong} show that $X_f$ and $X_k$ satisfy the assumptions of the Ping-Pong Lemma for $f^m$ and $k^n$ with $m,n \ge 3$.
\end{proof}

\bibliographystyle{alpha}
\bibliography{building_pong}

\end{document}